\newtheorem{teo}{Theorem}[section]
\newtheorem{prop}[teo]{Proposition}
\newtheorem{lemma}[teo]{Lemma}
\theoremstyle{definition}
\newtheorem{defin}[teo]{Definition}
\newtheorem{remark}[teo]{Remark}
\numberwithin{equation}{section}
\newcommand{\C}{\mathbb{C}}
\newcommand{\R}{\mathbb{R}}
\newcommand{\alfa}{\alpha}
\newcommand{\alf}{\alpha}
\newcommand{\restr}[1]{\vert_{#1}}
\newcommand{\om}{\omega}
\newcommand{\cd}{\cdot}
\newcommand{\OO}{\mathcal{O}}
\newcommand{\debar}{\overline{\partial } }
\newcommand{\ag}{\mathsf{A}_g}
\newcommand{\mg}{\mathsf{M}_g}
\newcommand{\pet}{\eta}
\newcommand{\teta}{\widehat{\eta}}
\newcommand{\esse}{{\mathcal S}}
\newcommand{\Ci}{{\mathcal C}}
\newcommand{\nc}{\newcommand} 
\nc{\cA}{{\mathcal A}}
\nc{\cC}{{\mathcal C}}
\nc{\cO}{{\mathcal O}}
\nc{\cS}{{\mathcal S}}
\nc{\cN}{{\mathcal N}}
\nc{\bC}{{\mathbb C}}
\nc{\bZ}{{\mathbb Z}}
\begin{document}

\title[A Hodge theoretic projective structure]{A Hodge theoretic projective structure on
compact Riemann surfaces}

\author[I. Biswas]{Indranil Biswas}

\address{School of Mathematics, Tata Institute of Fundamental
Research, Homi Bhabha Road, Mumbai 400005, India}

\email{indranil@math.tifr.res.in}

\author[E. Colombo]{Elisabetta Colombo}

\address{Dipartimento di Matematica, Universit\`a di Milano, via Saldini 50,
I-20133, Milano, Italy}

\email{elisabetta.colombo@unimi.it}

\author[P. Frediani]{Paola Frediani}

\address{Dipartimento di Matematica, Universit\`a di Pavia,
via Ferrata 5, I-27100 Pavia, Italy}

\email{paola.frediani@unipv.it}

\author[G. P. Pirola]{Gian Pietro Pirola}

\address{Dipartimento di Matematica, Universit\`a di Pavia,
via Ferrata 5, I-27100 Pavia, Italy}

\email{gianpietro.pirola@unipv.it}

\subjclass[2010]{14H10, 53B10, 14K20, 14H40}

\keywords{Projective structure, moduli space, Weil-Petersson form, Siegel form.}

\begin{abstract}
Given any compact Riemann surface $C$, there is a canonical meromorphic 2--form $\widehat\eta$
on $C\times C$, with pole of order two on the diagonal $\Delta\, \subset\, C\times C$, constructed in \cite{cfg}. 
This meromorphic 2--form $\widehat\eta$ produces a canonical projective structure on $C$. 
On the other hand the uniformization theorem provides another canonical projective structure on any compact Riemann surface $C$. 
We prove that these two projective structures differ in general. This is done
by comparing the $(0,1)$--component of the differential of the corresponding sections of the moduli
space of projective structures over the moduli space of curves. The $(0,1)$--component of the
differential of the section corresponding to the projective
structure given by the uniformization theorem was computed by Zograf and Takhtadzhyan in \cite{ZT} as the Weil--Petersson
K\"ahler form $\omega_{wp}$ on the moduli space of curves. 
We prove that the $(0,1)$--component of the differential of the section of the moduli space of projective structures
corresponding to $\widehat{\eta}$ is the pullback of a nonzero constant scalar
multiple of the Siegel form, on the moduli space
of principally polarized abelian varieties, by the Torelli map.

\medskip
\noindent
\textsc{R\'esum\'e.}\, Pour toute surface de Riemann compacte $C$ donn\'ee, il existe une 2--forme 
m\'eromorphe canonique $\widehat\eta$ sur le produit $C \times C$, avec des p\^oles d'ordre deux sur la diagonale 
$\Delta\, \subset\, C\times C$, construite dans \cite{cfg}.  Cette 2--forme m\'eromorphe $\widehat\eta$ munit $C$ 
d'une structure projective canonique. Par ailleurs, le th\'eor\`eme d'uniformisation produit une autre structure 
projective canonique sur toute surface de Riemann compacte $C$. Nous d\'emontrons qu'en g\'en\'eral ces deux 
structures projectives sont distinctes. Ceci est prouv\'e en consid\'erant les diff\'erentielles des deux sections 
correspondantes de l'espace des modules de structures projectives au-dessus de l'espace des modules de courbes 
complexes et en comparant leur composantes de type $(0,1)$. La composante de type $(0,1)$ de la diff\'erentielle de 
la section correspondant au th\'eor\`eme d'uniformisation a \'et\'e identifi\'ee par Zograf et Takhtadzhyan dans 
\cite{ZT}, comme \'etant la forme de K\"ahler de Weil--Petersson $\omega_{wp}$ sur l'espace des modules des courbes. 
Nous prouvons que la composante de type $(0,1)$ de la diff\'erentielle de la section de l'espace des modules de 
structures projectives correspondant \`a $\widehat{\eta}$ est l'image r\'eciproque par l'application de Torelli d'un 
multiple scalaire non nul de la forme de Siegel sur l'espace des modules de vari\'et\'es ab\'eliennes munies d'une 
polarisation principale.
\end{abstract}

\maketitle

\tableofcontents

\section{Introduction}\label{se1}

Let $C$ be a compact connected Riemann surface, and let $\Delta\,\subset\, C\times C\, =:\, S$ be the
(reduced) diagonal divisor. There is a canonical meromorphic 2-form on $S$
\begin{equation}\label{i0}
\widehat{\eta} \,\in \,H^0(S, \, \Omega^2_S\otimes {\mathcal O}_S(2\Delta))
\end{equation}
constructed in \cite{cfg}. This form governs the second fundamental form of the Torelli map (\cite{cfg}) and its
description is recalled in Section \ref{se2.1}. 
The form
$\widehat{\eta}$ has the following two properties \cite{cfg}:
\begin{enumerate}
\item The restriction of $\widehat{\eta}$ to $\Delta$ coincides with the section of
$(\Omega^2_S\otimes {\mathcal O}_S(2\Delta))\vert_\Delta\,=\, {\mathcal O}_\Delta$ given by the constant
function $1$ on $\Delta$.

\item The involution of $C\times C$ defined by $(x_1,\, x_2)\, \longmapsto\, (x_2,\, x_1)$ takes
$\widehat{\eta}$ to $-\widehat{\eta}$.
\end{enumerate}
It is known that any holomorphic section of $\Omega^2_S\otimes {\mathcal O}_S(2\Delta)$ satisfying the above
two conditions produces a projective structure on $C$ (a summary on projective structures is
given in Section \ref{se5}). Therefore,
$\widehat{\eta}$ in \eqref{i0} produces a projective structure on $C$ which is canonical in the sense that
it does not depend on the choice of any structure or object on $C$.

There are many standard projective structures on a compact Riemann surface: Bers uniformization, Schottky 
uniformization, Earle uniformization, the standard Poincar\'e--Koebe uniformization etc. Among these only the last 
one, given by the standard uniformization (Poincar\'e--Koebe), is canonical in the above sense. Therefore, a natural 
question to ask is whether the projective structure given by $\widehat{\eta}$ in \eqref{i0} coincides with the 
projective structure given by the uniformization theorem. While the general guess has been that they do coincide, our 
first main result negates it. To explain it, denote by $\mg$ the moduli space of curves of genus $g$, and by 
${\mathcal P}_g$ be the moduli space parametrizing the isomorphism classes of pairs given by a genus $g$ curve and a 
projective structure on it. The forgetful map ${\mathcal P}_g\, \longrightarrow\, \mg$
that forgets the projective structure is a holomorphic fiber bundle 
(with respect to the orbifold structure on $\mg$). We prove the following (see Theorem \ref{lem-ch} and Remark 
\ref{rgenus2} for the case of $g\,=\,2$):

\begin{teo}\label{th1}
Assume that $g\, \geq\, 2$, and let $\beta^\eta$ and $\beta^u$ be the $C^\infty$ sections of
the holomorphic fiber bundle ${\mathcal P}_g\, \longrightarrow\, \mg$ given by $\widehat{\eta}$ in \eqref{i0}
and the uniformization theorem respectively. Then the two sections $\beta^\eta$ and $\beta^u$ do not
coincide.
\end{teo}

We now briefly describe the strategy of the proof of Theorem \ref{th1}.

The holomorphic fiber bundle ${\mathcal P}_g\, \longrightarrow\, \mg$ is a holomorphic torsor for
$\Omega^1_{\mg}$. Therefore, the differential of any $C^\infty$ section $\theta\, :\,
\mg\, \longrightarrow\, {\mathcal P}_g$ of it produces a $(1 \ ,1)$--form
$$
\overline{\partial}(\theta)\, \in\, C^\infty(\mg;\, \Omega^{1,1}_{\mg})\, .
$$
In fact, choosing a holomorphic trivialization of the torsor over an open subset $U\, \subset\, \mg$, the
restriction $\theta\vert_U$ gives a $(1,\, 0)$--form $\theta'$ on $U$, and
$(\overline{\partial}(\theta))\vert_U\,=\, \overline{\partial}\theta'$.
For the section $\beta^u$ in Theorem \ref{th1}, a theorem of Zograf and Takhtadzhyan says that
the $(1,\, 1)$--form $\overline{\partial}(\beta^u)$ on $\mg$ coincides with the Weil--Petersson
K\"ahler form $\omega_{wp}$ \cite{ZT}.

We carry out a thorough study of the $(1,\, 1)$--form $\overline{\partial}(\beta^\eta)$ on $\mg$
associated to the section $\beta^\eta$ in Theorem \ref{th1}. We prove that for any $g\, \geq\, 3$,
the form $\overline{\partial}(\beta^\eta)$ fails to be nondegenerate on the hyperelliptic locus of
$\mg$. In particular, $\overline{\partial}(\beta^\eta)$ does not coincide with the K\"ahler form
$\omega_{wp}$. This gives Theorem \ref{th1} for $g\, \geq\, 3$.

We obtain a more precise description 
of $\overline{\partial}(\beta^\eta)$ which is explained now.

Let $\mathsf{A}_g$ be the moduli space of principally polarized
complex abelian varieties of dimension $g$. Let
\begin{equation}\label{itau}
\tau\,:\, \mathsf{M}_g \,\longrightarrow\, \mathsf{A}_g
\end{equation}
be the Torelli map that sends a curve to its Jacobian equipped with the principal polarization given by a
theta line bundle. This map $\tau$ is an orbifold immersion outside the hyperelliptic locus \cite{os}.
The variety $\mathsf{A}_g$ is equipped with an orbifold locally symmetric metric
$\omega_S$ known as the Siegel metric.

Our second main result is the following (see Theorem \ref{thms}):

\begin{teo}\label{th2}
The $(1,1)$-form $\overline{\partial} (\beta^\eta)$ on $\mg$, where $\beta^\eta$ is the section
in Theorem \ref{th1}, is a nonzero constant scalar multiple
of $\tau^*\omega_S$, where $\omega_S$ is the Siegel metric on $\ag$ and $\tau$ is the Torelli map in \eqref{itau}.
\end{teo}

It may be mentioned that the second fundamental form of the Torelli map with respect to $\omega_S$ was
studied extensively in \cite{cpt}, \cite{cf}, \cite{cfg} (see also \cite{gpt}, \cite{fp}). Finally, we
prove the following characterization of the section $\widehat{\eta}$ in \eqref{i0} (see Theorem \ref{1,1})):

\begin{teo}\label{th3}
The section $\widehat\eta$ is the unique element of $H^0(S,\, \Omega^2_S(2 \Delta))$ with cohomology class
in $H^2(S\setminus \Delta,\,\bC)$ of pure type $(1,\,1)$ whose restriction to $\Delta$ is $1$.
\end{teo}

The paper is organized as follows. 
Section \ref{se5} does not contain any new material. In this section we recall 
the definition of projective structures on a Riemann surface, and collect the results on projective structures that 
are used here, the main one being the earlier mentioned theorem of Zograf and Takhtadzhyan from \cite{ZT}. This 
entails building the set-up which takes up the most of Section \ref{se5}.
In Section \ref{se2} we first recall the definition of $\widehat{\eta}$, and then 
give an interpretation of it that allows us to extend it in families (Section \ref{se2.3}). In Section \ref{se3} we 
give a proof of Theorem \ref{th1} (Theorem \ref{lem-ch} and Remark \ref{rgenus2}) and of Theorem \ref{th2} (Theorem 
\ref{thms}). In Section \ref{se4} we study the cohomology class of a meromorphic 2-form on a surface and we prove 
Theorem \ref{th3} (Theorem \ref{1,1}). 

\section{Projective structures on a Riemann surface}\label{se5}

Let $\mathbb V$ be a complex vector space of dimension two. Let ${\mathbb P}({\mathbb V})$
be the projective space parametrizing the lines in $\mathbb V$. The group
$\text{PGL}({\mathbb V})\,=\, \text{GL}({\mathbb V})/{\mathbb C}^*$ acts faithfully on 
${\mathbb P}({\mathbb V})$.

Let $C$ be a compact connected Riemann surface. A holomorphic coordinate chart on $C$ is a pair
of the form $(U,\, \phi)$, where $U\, \subset\, C$ is an open subset and $\phi\,
:\, U\, \longrightarrow\, {\mathbb P}({\mathbb V})$ is a holomorphic embedding. A holomorphic
coordinate atlas on $C$ is a collection of coordinate charts $\{(U_i,\, \phi_i)\}_{i\in I}$
such that $C\,=\, \bigcup_{i\in I} U_i$. A projective structure on $C$ is given by a
holomorphic coordinate atlas $\{(U_i,\, \phi_i)\}_{i\in I}$ such that for all
pairs $i, j\,\in\, I\times I$ for which $U_i\bigcap U_j\, \not=\, \emptyset$, there is
an element $\tau_{j,i}\, \in\, \text{PGL}({\mathbb V})$
such that $\phi_j\circ\phi^{-1}_i$ is the restriction of to $\phi_i(U_i\bigcap U_j)$
of the automorphism of ${\mathbb P}({\mathbb V})$ given by $\tau_{j,i}$.
Two such collections of pairs $\{(U_i,\, \phi_i)\}_{i\in I}$ and
$\{(U_i,\, \phi_i)\}_{i\in J}$ are called equivalent if
their union $\{(U_i,\, \phi_i)\}_{i\in I\cup J}$ is again a
part of a collection of pairs satisfying the above condition. A \textit{projective structure} on $C$
is an equivalence class of collection of pairs satisfying the above condition; see \cite{Gu}.
There are projective structures on $C$, for example, the uniformization of $C$ produces a
projective structure on $C$. In fact, the space of all projective structures on $C$ is
an affine space modelled on the complex vector space $H^0(C,\, K^{\otimes 2}_C)$ \cite{Gu}.

For $i\,=\,1,\, 2$, let $p_i\, :\, {\mathbb P}({\mathbb V})\times {\mathbb P}({\mathbb V})
\, \longrightarrow\, {\mathbb P}({\mathbb V})$ be the projection to the $i$-th factor. Consider
the holomorphic line bundle
$$
{\mathcal L}_0\, :=\, K_{{\mathbb P}({\mathbb V})\times{\mathbb P}({\mathbb V})}\otimes
{\mathcal O}_{{\mathbb P}({\mathbb V})\times{\mathbb P}({\mathbb V})}(2\Delta_0)\,=\,
(p^*_1K_{{\mathbb P}({\mathbb V})}\otimes p^*_2K_{{\mathbb P}({\mathbb V})})
\otimes {\mathcal O}_{{\mathbb P}({\mathbb V})\times{\mathbb P}({\mathbb V})}(2\Delta_0)
$$
over ${\mathbb P}({\mathbb V})\times{\mathbb P}({\mathbb V})$,
where $\Delta_0\, \subset\, {\mathbb P}({\mathbb V})\times{\mathbb P}({\mathbb V})$ is the 
reduced diagonal divisor. Since $\text{Pic}({\mathbb P}({\mathbb V})\times{\mathbb 
P}({\mathbb V})) \,=\, \text{Pic}({\mathbb P}({\mathbb V}))\oplus \text{Pic}({\mathbb 
P}({\mathbb V}))$, the line bundle ${\mathcal L}_0$ is trivializable. Also, using
the Poincar\'e adjunction formula it follows that ${\mathcal 
L}_0\vert_{\Delta_0}$ is canonically trivialized. Hence combining these it follows that 
${\mathcal L}_0$ is canonically trivialized. Let
\begin{equation}\label{e1}
\sigma_0\, \in\, H^0({\mathbb P}({\mathbb V})\times{\mathbb P}({\mathbb V}),\, {\mathcal L}_0)
\end{equation}
be the section giving the canonical trivialization. Note that the diagonal action of 
$\text{PGL}({\mathbb V})$ on ${\mathbb P}({\mathbb V})\times{\mathbb P}({\mathbb V})$ has a 
canonical lift to an action of $\text{PGL}({\mathbb V})$ on ${\mathcal L}_0$. The section 
$\sigma_0$ in \eqref{e1} is fixed by this action of $\text{PGL}({\mathbb V})$ on ${\mathcal 
L}_0$. The involution of ${\mathbb P}({\mathbb V})\times{\mathbb P}({\mathbb V})$ defined by 
$(x,\, y)\, \longmapsto\, (y,\, x)$ lifts canonically to an involution of ${\mathcal L}_0
\,=\, K_{{\mathbb P}({\mathbb V})\times{\mathbb P}({\mathbb V})}\otimes
{\mathcal O}_{{\mathbb P}({\mathbb V})\times{\mathbb P}({\mathbb V})}(2\Delta_0)$, 
and the meromorphic $2$--form $\sigma_0$ is mapped to $-\sigma_0$ by this involution of ${\mathcal L}_0$.

Let $C$ be a compact connected Riemann surface. Let
\begin{equation}\label{ppq}
p,\, q\, :\, C\times C\, \longrightarrow\, C
\end{equation}
be the projections to the first factor and second factor respectively. Let
\begin{equation}\label{dcl}
{\mathcal L}\, :=\, K_{C\times C}\otimes {\mathcal O}_{C\times C}(2\Delta)
\,=\, (p^*K_C\otimes q^*K_C)\otimes {\mathcal O}_{C\times C}(2\Delta)
\, \longrightarrow\, C\times C
\end{equation}
be the holomorphic line bundle on $C\times C$,
where $\Delta\, \subset\, C\times C$ is the reduced diagonal divisor.
Let
\begin{equation}\label{dnu}
\nu\, :\, C\times C\, \longrightarrow\, C\times C
\end{equation}
be the involution defined by
$(x,\, y)\, \longmapsto\, (y,\, x)$. This involution lifts canonically to an involution of
$\mathcal L$ defined by
\begin{equation}\label{il}
v_x\wedge w_y \, \longmapsto\, w_y\wedge v_x\, , \ \ v_x\, \in\, (K_C)_x\, ,\ \
w_y\, \in\, (K_C)_y\, , \ \ x,\, y\, \in\, C\, .
\end{equation}

For any $k\, \geq\, 1$, the $k$--th order infinitesimal neighborhood of $\Delta$ in
$C\times C$ will be denoted by $\Delta_{k+1}$.
Using the Poincar\'e adjunction formula, the restriction
${\mathcal L}\vert_\Delta$ is the trivial line bundle on $\Delta$. In fact, the trivialization
of ${\mathcal L}\vert_\Delta$ extends to a canonical trivialization of
${\mathcal L}\vert_{\Delta_2}$ (see \cite[p.~754, Theorem 2.1]{BR1}, \cite[p.~688, Theorem 2.2]{BR2}).
Let
\begin{equation}\label{evp}
\varpi_2\, \in\, H^0(\Delta_2,\, {\mathcal L}\vert_{\Delta_2})
\end{equation}
be the section giving this
canonical trivialization. This section $\varpi_2$ is uniquely determined
by the following two conditions:
\begin{itemize}
\item the restriction of $\varpi_2$ to $\Delta\, \subset\, \Delta_2$ coincides with the
section of ${\mathcal L}\vert_{\Delta}\,=\, {\mathcal O}_\Delta$ given by the constant function $1$, and

\item the involution in \eqref{il} takes $\varpi_2$ to $-\varpi_2$.
\end{itemize}

Let $P\,=\,\{(U_i,\, \phi_i)\}_{i\in I}$ be a projective structure on $C$.
For any $i\, \in\, I$, there is a natural isomorphism
$$
(\phi_i\times \phi_i)^*{\mathcal L}_0\, \stackrel{\sim}{\longrightarrow}\,
{\mathcal L}\vert_{U_i\times U_i}
$$
given by the differential of the map $\phi_i$. Using this isomorphism, the section
$\sigma_0$ in \eqref{e1} produces a section
$$
\sigma_{0,i}\,=\,(\phi_i\times \phi_i)^*\sigma_0
\, \in\, H^0(U_i\times U_i,\, {\mathcal L}\vert_{U_i\times U_i})\, .
$$
Since $\sigma_0$ is fixed by the action of $\text{PGL}({\mathbb V})$ on ${\mathcal L}_0$, these
sections $\sigma_{0,i}$ patch together compatibly to define a section
\begin{equation}\label{e3}
\widetilde{\sigma}\, \in\, H^0({\mathcal U},\, {\mathcal L}\vert_{\mathcal U})\, ,
\end{equation}
where ${\mathcal U}\, \subset\, C\times C$ is an analytic open subset containing the diagonal $\Delta$.

The section $\widetilde{\sigma}$ in \eqref{e3} has the following two properties:
\begin{enumerate}
\item The involution of $\mathcal L$ in \eqref{il} lifting $\nu$ in \eqref{dnu} takes $\widetilde\sigma$
to $-\widetilde\sigma$, and

\item the restriction $\widetilde{\sigma}\vert_{\Delta_2}$ coincides with the canonical
trivialization of ${\mathcal L}\vert_{\Delta_2}$ in \eqref{evp}; this follows from \cite[p.~756, Proposition
2.10]{BR1}. Note that $\widetilde{\sigma}\vert_{\Delta}$ coincides with the canonical
trivialization of ${\mathcal L}\vert_{\Delta}$, and hence invoking the earlier mentioned characterization of
the canonical trivialization of ${\mathcal L}\vert_{\Delta_2}$ if follows that $\widetilde{\sigma}\vert_{\Delta_2}$
coincides with it.
\end{enumerate}

Let
$$
{\mathcal Q}(C)\, \subset\, H^0(\Delta_3,\, {\mathcal L}\vert_{\Delta_3})
$$
be the locus of all sections $s$ such that the restriction $s\vert_{\Delta_2}$ coincides with the
canonical trivialization of ${\mathcal L}\vert_{\Delta_2}$ in \eqref{evp}. This ${\mathcal Q}(C)$ is
evidently an affine space modelled on the complex vector space $H^0(C,\, K^{\otimes 2}_C)$. Let
\begin{equation}\label{e4}
\sigma\, \in\, H^0(\Delta_3,\, {\mathcal L}\vert_{\Delta_3})
\end{equation}
be the restriction of the section $\widetilde\sigma$ in \eqref{e3}
to the nonreduced divisor $\Delta_3$. Since the restriction $\sigma\vert_{\Delta_2}$ coincides with the
canonical trivialization of ${\mathcal L}\vert_{\Delta_2}$ in \eqref{evp},
it follows that $\sigma\, \in\, {\mathcal Q}(C)$.

Let ${\mathcal P}(C)$ denote the space of all projective structures on $C$. We have a map
\begin{equation}\label{ephi}
\Phi\, :\, {\mathcal P}(C)\, \longrightarrow\, {\mathcal Q}(C)
\end{equation}
that sends any $P\, \in\, {\mathcal P}(C)$ to $\sigma\, \in\, {\mathcal Q}(C)$ constructed in
\eqref{e4} using $P$. This map $\Phi$ is an isomorphism of affine spaces modelled on 
$H^0(C,\, K^{\otimes 2}_C)$ (see \cite[p.~757, Theorem 3.2]{BR1} and \cite[p.~758, Lemma 3.6]{BR1};
see also \cite[p.~688, Theorem 2.2]{BR2}).

Let
\begin{equation}\label{elpi}
\pi\, :\, {\mathcal C}\, \longrightarrow\, B
\end{equation}
be a smooth holomorphic family of irreducible complex projective curves of genus $g$. Let
\begin{equation}\label{dpqt}
\widetilde{p},\, \widetilde{q}\, :\, {\mathcal C}\times_B{\mathcal C}\, \longrightarrow\,\mathcal C
\end{equation}
be the projections to the first factor and second factor respectively. Let
\begin{equation}\label{edb}
{\bf\Delta}_B\,:=\,\{(x,\,x) \ \mid \ x \,\in\, {\mathcal C}\} \,\subset\, {\mathcal C}\times_B{\mathcal C}
\end{equation}
be the reduced relative diagonal divisor in ${\mathcal C}\times_B{\mathcal C}$.
Let ${\mathcal K}\, \longrightarrow\, \mathcal C$ be the relative canonical bundle for the
projection $\pi$ in \eqref{elpi}. Consider the family of surfaces
\begin{equation}\label{Pi}
\Pi\, :\, {\mathcal C}\times_B{\mathcal C}\, \longrightarrow \,B\, .
\end{equation}
Let
\begin{equation}\label{mbl}
{\mathbb L}\, :=\, K_{({\mathcal C}\times_B{\mathcal C})/B}\otimes
{\mathcal O}_{{\mathcal C}\times_B{\mathcal C}}(2{\bf\Delta}_B)\,=\,
({\widetilde p}^*{\mathcal K}\otimes {\widetilde q}^*{\mathcal K})\otimes
{\mathcal O}_{{\mathcal C}\times_B{\mathcal C}}(2{\bf\Delta}_B)
\end{equation}
be the holomorphic line bundle on this family ${\mathcal C}\times_B{\mathcal C}$,
where $\widetilde p$ and $\widetilde q$ are the projections in \eqref{dpqt}.

Since $k{\bf\Delta}_B$ is an effective divisor on ${\mathcal C}\times_B {\mathcal C}$
for any positive integer $k$, we have
$$
{\mathcal O}_{{\mathcal C}\times_B {\mathcal C}}(-k{\bf\Delta}_B)\, \subset\,
{\mathcal O}_{{\mathcal C}\times_B {\mathcal C}}\, .
$$
Now tensoring the above inclusion with the identity map of ${\mathbb L}$ we have
$$
{\mathcal O}_{{\mathcal C}\times_B {\mathcal C}}(-k{\bf\Delta}_B)\otimes {\mathbb L}\, \subset\,
{\mathcal O}_{{\mathcal C}\times_B {\mathcal C}}\otimes {\mathbb L}\,=\, {\mathbb L}\, .
$$
Hence the quotient for the above inclusion
$$
{\mathbb L}/({\mathcal O}_{{\mathcal C}\times_B
{\mathcal C}}(-k{\bf\Delta}_B)\otimes {\mathbb L})
$$
is a coherent analytic sheaf on ${\mathcal C}\times_B {\mathcal C}$; it is supported on the
$(k-1)$-th order infinitesimal neighborhood of ${\bf\Delta}_B\, \subset\, {\mathcal C}\times_B {\mathcal C}$.
Note that ${\mathbb L}/({\mathcal O}_{{\mathcal C}\times_B
{\mathcal C}}(-k{\bf\Delta}_B)\otimes {\mathbb L})$ coincides with $I(k)_*I(k)^*{\mathbb L}$, where
$I(k)$ is the inclusion map of the $(k-1)$-th order infinitesimal neighborhood of ${\bf\Delta}_B$
to ${\mathcal C}\times_B {\mathcal C}$.
Now using the map $\Pi$ in \eqref{Pi}, construct the direct images 
\begin{equation}\label{cvn}
{\mathcal V}\, :=\, \Pi_*({\mathbb L}/({\mathcal O}_{{\mathcal C}\times_B
{\mathcal C}}(-3{\bf\Delta}_B)\otimes {\mathbb L}))\,\longrightarrow\,B
\end{equation}
and
$$
{\mathcal V}_2\, :=\, \Pi_*({\mathbb L}/({\mathcal O}_{{\mathcal C}\times_B
{\mathcal C}}(-2{\bf\Delta}_B)\otimes {\mathbb L}))\,\longrightarrow\, B
$$
which are holomorphic vector bundles over $B$. Since
$${\mathbb L}/({\mathcal O}_{{\mathcal C}\times_B
{\mathcal C}}(-k{\bf\Delta}_B)\otimes {\mathbb L})\,=\, I(k)_*I(k)^*{\mathbb L}\, ,$$ if $\Delta^b\,\subset\,
\pi^{-1}(b)\times \pi^{-1}(b)$ is the reduced diagonal for any point $b\, \in\, B$, then
the fiber of ${\mathcal V}$ (respectively, ${\mathcal V}_2$) over $b$ is
$H^0(\Delta^b_3,\, {\mathbb L}\vert_{\Delta^b_3})$ (respectively, $H^0(\Delta^b_2,\, {\mathbb L}\vert_{\Delta^b_2})$),
where $\Delta^b_k$ is the $(k-1)$-th order infinitesimal neighborhood of $\Delta^b$ in
the surface $\pi^{-1}(b)\times \pi^{-1}(b)$. 

There is a natural surjective homomorphism
\begin{equation}\label{psi}
\Psi\, :\, {\mathcal V}\, \longrightarrow\, {\mathcal V}_2
\end{equation}
given by projection ${\mathbb L}/({\mathcal O}_{{\mathcal C}\times_B
{\mathcal C}}(-3{\bf\Delta}_B)\otimes {\mathbb L})\, \longrightarrow\,
{\mathbb L}/({\mathcal O}_{{\mathcal C}\times_B
{\mathcal C}}(-2{\bf\Delta}_B)\otimes {\mathbb L})$ constructed using
the inclusion map $${\mathcal O}_{{\mathcal C}\times_B
{\mathcal C}}(-2{\bf\Delta}_B)\otimes {\mathbb L})\, \hookrightarrow\,
{\mathcal O}_{{\mathcal C}\times_B
{\mathcal C}}(-3{\bf\Delta}_B)\otimes {\mathbb L})\, .$$ The vector bundle
${\mathcal V}_2$ has a tautological holomorphic section given by the
canonical trivialization of ${\mathcal L}\vert_{\Delta_2}$ in \eqref{evp} for any curve $C$.
This holomorphic section of ${\mathcal V}_2$ will be denoted by $s_0$. Now define
\begin{equation}\label{whcv}
\widehat{\mathcal V}\, :=\, \Psi^{-1}(s_0)\, \subset\, \mathcal V\, ,
\end{equation}
where $\Psi$ is the projection in \eqref{psi}. We note that $\widehat{\mathcal V}$ is a holomorphic
affine bundle over $B$ modelled on the vector bundle $\pi_*{\mathcal K}^{\otimes 2}$, where $\pi$
is the projection in \eqref{elpi} and
$\mathcal K$ is the relative canonical bundle in \eqref{mbl}. Indeed, this 
follows immediately from the fact that
\begin{equation}\label{ke}
\text{kernel}(\Psi)\,=\, \pi_*{\mathcal K}^{\otimes 2}\, .
\end{equation}

The following lemma is an immediate consequence of the isomorphism $\Phi$ in \eqref{ephi}.

\begin{lemma}\label{lem1}
The $C^\infty$ (respectively, holomorphic) sections of the fiber bundle $\widehat{\mathcal V}
\, \longrightarrow\,B$ in \eqref{whcv} are in a natural bijective correspondence with
the $C^\infty$ (respectively, holomorphic) families of projective structures for the family of
curves $\mathcal C$.
\end{lemma}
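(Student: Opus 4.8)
The plan is to prove Lemma~\ref{lem1} by combining the pointwise identification $\Phi$ of \eqref{ephi} with a check that this identification is compatible with variation in families, and then packaging everything as a bijection between sections of $\widehat{\mathcal V}\,\longrightarrow\,B$ and families of projective structures on $\mathcal C\,\longrightarrow\,B$. First I would observe that the fiber of $\widehat{\mathcal V}$ over a point $b\,\in\,B$ corresponding to a curve $C$ is exactly the affine space ${\mathcal S}(C)\,\subset\,H^0(3\Delta,\,{\mathcal L}\vert_{3\Delta})$: indeed ${\mathcal V}_b\,=\,H^0(3\Delta,\,{\mathcal L}\vert_{3\Delta})$, the map $\Psi_b$ is restriction to $2\Delta$, the section $s_0(b)$ is the canonical trivialization of ${\mathcal L}\vert_{2\Delta}$, so $\widehat{\mathcal V}_b\,=\,\Psi_b^{-1}(s_0(b))\,=\,{\mathcal S}(C)$. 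By the cited result \cite[p.~757, Theorem 3.2]{BR1}, the map $\Phi\,:\,{\mathcal P}(C)\,\longrightarrow\,{\mathcal S}(C)$ is an isomorphism of affine spaces over $H^0(C,\,K_C^{\otimes 2})$, so at the level of individual fibers a point of $\widehat{\mathcal V}_b$ is the same thing as a projective structure on $C$.

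Next I would promote this fiberwise statement to families. Given a $C^\infty$ (resp.\ holomorphic) section $\xi$ of $\widehat{\mathcal V}\,\longrightarrow\,B$, for each $b$ the value $\xi(b)\,\in\,\widehat{\mathcal V}_b\,=\,{\mathcal S}(\pi^{-1}(b))$ determines via $\Phi^{-1}$ a projective structure $P_b$ on the fiber $\pi^{-1}(b)$; I must check that $b\,\longmapsto\,P_b$ is a $C^\infty$ (resp.\ holomorphic) family of projective structures in the sense of the relative construction of Section~\ref{se3}. Conversely, given a family of projective structures on $\mathcal C$, the relative version of the construction in \eqref{e3}--\eqref{e4}---using the $\text{PGL}(\mathbb V)$-invariant section $\sigma_0$ of \eqref{e1} pulled back along the relative coordinate charts and restricted to $3{\bf\Delta}_B$---produces a section of $\Pi_*({\mathbb L}\vert_{3{\bf\Delta}_B})$ landing in $\widehat{\mathcal V}$, because its restriction to $2{\bf\Delta}_B$ is the canonical trivialization $s_0$. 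The two constructions are mutually inverse fiberwise by Theorem~3.2 of \cite{BR1}, hence mutually inverse as maps on families.

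The main obstacle will be the regularity bookkeeping: verifying that the fiberwise correspondence $\Phi$ varies $C^\infty$-ly (resp.\ holomorphically) with $b$. Concretely, one must see that if $\{(U_i,\,\phi_i)\}$ is a $C^\infty$ (resp.\ holomorphic) relative coordinate atlas with $\text{PGL}(\mathbb V)$-valued transition data, then the associated sections $\sigma_{0,i}\,=\,(\phi_i\times\phi_i)^*\sigma_0$ glue to a relative section of ${\mathbb L}$ near ${\bf\Delta}_B$ whose $3{\bf\Delta}_B$-jet depends smoothly (resp.\ holomorphically) on $B$; and conversely that $\Phi^{-1}$, which on affine spaces is an affine-linear isomorphism depending on the Schwarzian-type data attached to $C$, assembles into a morphism of affine bundles over $B$. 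Since $\Phi$ and its inverse are given by the same local formulas uniformly in the curve---the adjunction trivialization of ${\mathcal L}\vert_{2\Delta}$ and the section $\sigma_0$ are canonical and $\text{PGL}(\mathbb V)$-equivariant, hence behave well in families---this is a routine but slightly tedious chart-by-chart argument. Once it is in place, the bijection of Lemma~\ref{lem1} follows formally, and it is automatically compatible with the affine-bundle structures for $\pi_*{\mathcal K}^{\otimes 2}$ in view of \eqref{ke} and the affine-space structure of ${\mathcal P}(C)$ over $H^0(C,\,K_C^{\otimes 2})$.
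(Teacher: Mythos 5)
Your proposal is correct and follows essentially the same route as the paper: the paper simply notes that the lemma is an immediate consequence of the fiberwise isomorphism $\Phi$ in \eqref{ephi}, which is exactly your argument, with the fiber identification $\widehat{\mathcal V}_b\,=\,{\mathcal S}(\pi^{-1}(b))$ and the regularity bookkeeping spelled out in more detail than the paper bothers to give.
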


Let
$$
\beta\, :\, B\, \longrightarrow\, \widehat{\mathcal V}
$$
be a $C^\infty$ section. Denote the Dolbeault operator for the holomorphic vector bundle 
$\mathcal V$, defined in \eqref{cvn}, by $\overline{\partial}_{\mathcal V}$. Since $\beta$ 
is also a section of $\mathcal V$, we have
$$
\overline{\partial}_{\mathcal V}(\beta)\, \in\, \Omega^{0,1}(B, \,{\mathcal V})\, .
$$
The projection $\Psi$ in \eqref{psi} is evidently holomorphic; recall that
$\Psi(\beta)$ is a holomorphic section of ${\mathcal V}_2$. These imply that for the section
$\Psi\circ\overline{\partial}_{\mathcal V}(\beta)\, \in\, \Omega^{0,1}(B, \,{\mathcal V}_2)$, we have
$$
\Psi\circ\overline{\partial}_{\mathcal V}(\beta)\,=\, \overline{\partial}_{{\mathcal V}_2}
(\Psi(\beta))\, =\, 0\, ,
$$
where $\overline{\partial}_{{\mathcal V}_2}$ is the Dolbeault operator for the holomorphic
vector bundle ${\mathcal V}_2$. Hence from \eqref{ke} it follows immediately that
\begin{equation}\label{k2}
\overline{\partial}_{\mathcal V}(\beta)\, \in\, \Omega^{0,1}(B, \,
\pi_*{\mathcal K}^{\otimes 2})\, .
\end{equation}

Let $\mg$ denote the moduli space of smooth complex
projective curves of genus $g$. Let
\begin{equation}\label{ega}
\Gamma\, :\, B\, \longrightarrow\, {\mathsf M}_g
\end{equation}
be the holomorphic map to the moduli space of curves corresponding to the above family $\mathcal C$ over
$B$.

Let $\omega_{wp}$ be the Weil--Petersson K\"ahler form on ${\mathsf M}_g$; it
is actually an orbifold $(1,1)$--form. The projective structure on the Riemann surfaces given by
the uniformization theorem produces a $C^\infty$ section of $\widehat{\mathcal V}$. Let
\begin{equation}\label{bu}
\beta^u\, :\, B\, \longrightarrow\, \widehat{\mathcal V}
\end{equation}
be the $C^\infty$ section given by
the uniformization theorem. For this $\beta^u$ it was shown by Zograf and Takhtadzhyan that
\begin{equation}\label{wp}
\overline{\partial}_{\mathcal V}(\beta^u)\, =\, \Gamma^*\omega_{wp}\,\in\,
\Omega^{0,1}(B, \, \Gamma^* \Omega^{1,0}_{{\mathsf M}_g})
\end{equation}
\cite[p.~310, Theorem 2]{ZT}, \cite[p.~311, Remark 3]{ZT}; see \cite[p.~214, Theorem 1.7]{Iv}
for an excellent exposition (see also \cite[p.~355, Theorem 9.2]{Mc}).

Let $\beta_1,\, \beta_2\, :\, B\,\longrightarrow\,\widehat{\mathcal V}$ be two
$C^\infty$ sections of $\widehat{\mathcal V}\, \longrightarrow\,B$ such that 
$$
\overline{\partial}_{\mathcal V}(\beta_1)\, =\,
\overline{\partial}_{\mathcal V}(\beta_2)\, .
$$
Since $\Psi(\beta_1-\beta_2)\,=\, 0$, where $\Psi$ is the projection in \eqref{psi}, from
\eqref{ke} we conclude that
\begin{equation}\label{c1}
\beta_1-\beta_2\, \in\, H^0(B,\, \Gamma^* \Omega^{1,0}_{{\mathsf M}_g})\, ,
\end{equation}
where $\Gamma$ is the map in \eqref{ega}.

\begin{lemma}\label{lem2}
Let $\beta_1,\, \beta_2\, :\, B\,\longrightarrow\,\widehat{\mathcal V}$ be two
$C^\infty$ sections of $\widehat{\mathcal V}\, \longrightarrow\,B$ such that 
$$
\overline{\partial}_{\mathcal V}(\beta_1)\, =\,
\overline{\partial}_{\mathcal V}(\beta_2)\, .
$$
Then $\beta_1\,=\, {\mathbb T}\circ \beta_2$, where ${\mathbb T}$ is a holomorphic 
automorphism of the $\Gamma^* \Omega^{1,0}_{{\mathsf M}_g}$--torsor $\widehat{\mathcal V}$ 
over $B$. Conversely, for any holomorphic automorphism $\mathbb T$ of the $\Gamma^* 
\Omega^{1,0}_{{\mathsf M}_g}$--torsor $\widehat{\mathcal V}$, and for any $C^\infty$ section 
$\beta\, :\, B\,\longrightarrow\,\widehat{\mathcal V}$ of\, $\widehat{\mathcal V}\, 
\longrightarrow\,B$,
$$
\overline{\partial}_{\mathcal V}(\beta)\, =\,
\overline{\partial}_{\mathcal V}({\mathbb T}\circ\beta)\, .
$$
\end{lemma}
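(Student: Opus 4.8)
The statement has two directions, and the second (converse) is the easier one, so I would dispatch it first. Given a holomorphic automorphism $\mathbb T$ of the $\Gamma^*\Omega^{1,0}_{{\mathsf M}_g}$--torsor $\widehat{\mathcal V}$ over $B$, translation by $\mathbb T$ differs from the identity by a global \emph{holomorphic} section of $\Gamma^*\Omega^{1,0}_{{\mathsf M}_g}$; that is, $\mathbb T\circ\beta = \beta + \theta$ for a single $\theta\in H^0(B,\,\Gamma^*\Omega^{1,0}_{{\mathsf M}_g})$. Since $\pi_*{\mathcal K}^{\otimes 2} = \Gamma^*\Omega^{1,0}_{{\mathsf M}_g}$ sits inside $\mathcal V$ as a holomorphic subbundle (this is \eqref{ke}), and $\overline{\partial}_{\mathcal V}$ restricts to its Dolbeault operator on that subbundle, we get $\overline{\partial}_{\mathcal V}(\mathbb T\circ\beta)=\overline{\partial}_{\mathcal V}(\beta)+\overline{\partial}(\theta)=\overline{\partial}_{\mathcal V}(\beta)$ because $\theta$ is holomorphic. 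This gives the converse.

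For the forward direction, suppose $\overline{\partial}_{\mathcal V}(\beta_1)=\overline{\partial}_{\mathcal V}(\beta_2)$. The discussion preceding the lemma (culminating in \eqref{c1}) already shows that $\delta := \beta_1-\beta_2$, a priori a $C^\infty$ section of the subbundle $\pi_*{\mathcal K}^{\otimes 2}\subset\mathcal V$, is actually \emph{holomorphic}: indeed $\overline{\partial}(\delta)=\overline{\partial}_{\mathcal V}(\beta_1)-\overline{\partial}_{\mathcal V}(\beta_2)=0$, so $\delta\in H^0(B,\,\Gamma^*\Omega^{1,0}_{{\mathsf M}_g})$. Now define $\mathbb T$ to be translation by $\delta$ in the torsor $\widehat{\mathcal V}$: for any local section $v$ of $\widehat{\mathcal V}$, set $\mathbb T(v):=v+\delta$. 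This is well-defined globally precisely because $\delta$ is a global section of the structure group bundle $\Gamma^*\Omega^{1,0}_{{\mathsf M}_g}$, and it is holomorphic because $\delta$ is. By construction $\mathbb T\circ\beta_2=\beta_2+\delta=\beta_1$, as required.

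The only point that needs a word of care — and I would call it the mildly technical step rather than a genuine obstacle — is the identification, implicit throughout, of $\overline{\partial}$ of a section of the affine bundle $\widehat{\mathcal V}$ with a $\Gamma^*\Omega^{1,0}_{{\mathsf M}_g}$--valued $(0,1)$--form, and the compatibility of the two notions of $\overline{\partial}$ (the one coming from $\overline{\partial}_{\mathcal V}$ restricted to the subbundle $\text{kernel}(\Psi)$, and the intrinsic Dolbeault operator of $\pi_*{\mathcal K}^{\otimes 2}$). This is exactly what is set up in \eqref{k2}: for any $C^\infty$ section $\beta$ of $\widehat{\mathcal V}$ one has $\overline{\partial}_{\mathcal V}(\beta)\in\Omega^{0,1}(B,\,\pi_*{\mathcal K}^{\otimes 2})$, and the difference of two such is the $\overline{\partial}$ of the $C^\infty$ difference section computed inside $\pi_*{\mathcal K}^{\otimes 2}$. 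Granting this (it is immediate from the fact that $\widehat{\mathcal V}\hookrightarrow\mathcal V$ is a holomorphic affine embedding modeled on the holomorphic subbundle $\pi_*{\mathcal K}^{\otimes 2}$, with $\overline{\partial}_{\mathcal V}$ restricting compatibly), both directions reduce to the trivial linear-algebra statements above, so the proof is short.
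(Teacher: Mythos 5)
Your proposal is correct and follows essentially the same route as the paper: identify $\beta_1-\beta_2$ as a global holomorphic section of $\Gamma^*\Omega^{1,0}_{{\mathsf M}_g}$ via \eqref{ke} and the $\overline{\partial}$--equality (this is \eqref{c1}), and use that holomorphic automorphisms of the torsor $\widehat{\mathcal V}$ are exactly translations by elements of $H^0(B,\,\Gamma^*\Omega^{1,0}_{{\mathsf M}_g})$. The paper's proof is just a terser statement of the same two observations, so nothing more is needed.
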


\begin{proof}
This follows from \eqref{c1}, because $H^0(B,\, \Gamma^* \Omega^{1,0}_{{\mathsf M}_g})$ is 
in fact the group of holomorphic automorphisms of the $\Gamma^* \Omega^{1,0}_{{\mathsf 
M}_g}$--torsor $\widehat{\mathcal V}$ over $B$.
\end{proof}

\section{The intrinsic meromorphic 2-form on $C\times C$}\label{se2}

In this section we first recall an intrinsic meromorphic 2-form $\widehat{\eta}$ on $C\times C$, 
constructed in \cite{cfg} using results from \cite{cpt}, which governs the second fundamental form of the Torelli map 
{\cite[Theorem 3.7]{cfg}}.
We then give a new interpretation of this form in Subsection \ref{se2.2}, which allows us to extend it
on families (this is done in Subsection \ref{se2.3}).
This extension connects $\widehat\eta$ with the the Hodge decomposition, as shown in Theorem \ref{splitting}.

\subsection{Construction of $\widehat{\eta}$}\label{se2.1}

In this subsection we recall the definition and some properties of the form $\widehat{\eta}$ following \cite{cfg}. 

The canonical line bundle of a complex manifold $Y$ will be denoted by $K_Y$.

As before, $C$ is a smooth complex projective curve; take a point $x \,\in\, C$. Let
\begin{equation}\label{jx}
j_x\,: \, H^0(C,\,K_C(2x))
 \,\hookrightarrow\, H^1(C\setminus\{x\} ,\,\mathbb{C})\,=\,H^1(C,\,\mathbb{C})
\end{equation}
be the injective homomorphism that associates to a meromorphic $1$--form, with at most a double 
pole at $x$, its de Rham cohomology class. Since
$$\dim H^0(C,K_C(2x))\,=\,g+1\ \ \text{ and }\ \ H^{1,0}(C) \,\subset\, j_x(H^0(C,\, K_C(2x)))\, ,$$
the inverse image 
$j^{-1}_x(H^{0,1}(C))$ has dimension $1$; here $H^{0,1}(C)$ is considered as a subspace
of $H^1(C,\,\mathbb{C})$ using the Hodge
decomposition. If we fix a local holomorphic coordinate function $z$ on 
a neighborhood of $x$ with $z(x)\,=\,0$, there exists a unique element $\mu$ in this line 
$j^{-1}_x(H^{0,1}(C))$ whose expression on $U$ is
\begin{equation}
\label{phi}
\mu\vert_U \,:=\, \bigg( \frac{1}{z^2}+h(z) \bigg) dz\, ,
\end{equation}
where $h$ is a holomorphic function. So we have a map
\begin{equation}\label{etax}
\eta_x\,:\, T_xC \,\longrightarrow\, H^0(C,\, K_C(2x))
\end{equation}
that sends $\lambda \frac{\partial}{\partial z}(x)$ to $\lambda \mu$; this
map is evidently independent of the choice of the holomorphic coordinate function $z$.

The following is proved in \cite{cfg}.

\begin{lemma}[{\cite[Lemma 3.5]{cfg}}]\label{eta-can}
Identify $H^{0,1}(C)$ with $H^0(C,\,K_C)^*$ using Serre
duality. Then the line $$j_x(H^0(C,\,K_C(2x)))\cap H^{0,1}(C)\, \subset\,
H^{0,1}(C)\, ,$$
where $j_x$ is the map in \eqref{jx}, corresponds to the hyperplane
in $H^0(C,\,K_C)$ defined by all $1$-forms vanishing at $x$.
\end{lemma}

Consider the complex surface $S\,:= \,C\times C$ and the line bundle
${\mathcal L}\,:=\, K_S\otimes{\mathcal O}_S(2
\Delta)\,=\, K_S(2\Delta)$ constructed on it in \eqref{dcl}. Let
$$
V\,:=\, p_*((q^*K_C)\otimes{\mathcal O}_S(2\Delta)) \ \ \text{ and }\ \ E\,:=\, p_* {\mathcal L}
$$
be the vector bundles $C$, where $p$ and $q$ are the projections in \eqref{ppq}.
The projection formula, \cite[p.~426, A4]{Ha}, says that
$E \,=\, K_C \otimes V$. Since $$(q^*K_C\otimes{\mathcal O}_S(2
\Delta))\restr{\{x\}\times C} \,=\, K_C (2x)\, ,$$ we have
$$H^0(p^{-1}(x) ,\, (q^*K_C)\otimes{\mathcal O}_S(2\Delta)\vert_{p^{-1}(x)})
\,=\, H^0 (C,\, K_C (2x))\, .$$
The fiber of the holomorphic vector bundle $V\, \longrightarrow\, C$ over $x\,\in\, C$ is $H^0(C ,
\, K_C(2x))$, and the map $x\,\longmapsto\, \pet_x$ (constructed in \eqref{etax}) is a 
$C^\infty$ section of $K_C\otimes V\,=\, E$. Let
\begin{equation}\label{eeta}
\eta\, \in\, C^\infty(C;\, E)
\end{equation}
be this section given by $x\,\longmapsto\, \pet_x$.

The following was proved in \cite{cfg}.

\begin{prop}[{\cite[Proposition 3.4]{cfg}}]\label{prophi}
The section $\pet$ in \eqref{eeta} is in fact holomorphic. 
\end{prop}

Since $E\,=\,p_* {\mathcal L}$, there is an isomorphism $H^0(C,\,E) \,\cong\, H^0(S,\,{\mathcal L})$
that associates to any $\alfa \,\in \,H^0(C,\,E)$ the section $\widehat{\alf} \,\in\,
H^0(S,\,{\mathcal L})$ such that
$$
\alf_x\,=\, \widehat{\alf} \restr{\{x\}\times C} \,\in\, T_x^*C \otimes
H^0(C,\, K_C(2x)) \,=\, E_x\, .
$$
Thus, from Proposition \ref{prophi} it follows that there is a holomorphic section
\begin{equation}\label{weta}
\teta \,\in\, H^0(S,\,{\mathcal L})
\end{equation}
corresponding to $\pet$.

\begin{prop}[{\cite[Lemma 3.5]{cfg}}]\label{prophi2}
The tautological lift to $\mathcal L$ of the involution of $C\times C$, defined by
$(x,\, y)\, \longmapsto\, (y,\, x)$, takes the section $\teta$ in \eqref{weta} to $-\teta$.
\end{prop}

Let $z$ be a local holomorphic coordinate on on $C$; denote $z\circ p$
and $z\circ q$ by $z_1$ and $z_2$ respectively, where $p$ and $q$ are the projections in \eqref{ppq}.
Locally, around the diagonal, we have
$$
\widehat{\eta}\,=\, \frac{dz_1\wedge dz_2}{(z_1- z_2)^2}+ f(z_1,
z_2)dz_1\wedge dz_2\, ,
$$
where $f$ is a holomorphic
function with $f(z_1,z_2)\,=\,f(z_2,z_1)$ (see Proposition \ref{prophi2}).
The form $\widehat{\eta}$ also appears in an unpublished book of Gunning; he
calls it the ``intrinsic double differential of the second kind'' \cite{gun}. 

\subsection{A new interpretation of $\eta$}\label{se2.2}

Consider the differential
\begin{equation}\label{epsi}
\psi\,:\, TC \,\longrightarrow\, H^1(C,\, \OO_C) \otimes \OO_C
\end{equation}
of the Abel-Jacobi map $$C\, \longrightarrow\, \text{Pic}^1(C)\, ,\ \
x\, \longmapsto\, {\mathcal O}_C(x)\, .$$
It is the dual of the evaluation map $ev\,:\, H^0(C,\, K_C) \otimes \OO_C\,\longrightarrow\, K_C$,
after we identify $H^1(C,\, {\mathcal O}_C)^*$ with $H^0(C, \, K_C)$ using
Serre duality. 

Let $i_C\,:\, H^1(C,\OO_C)\,\hookrightarrow\,H^1(C,\, \C) $ be the natural identification of $H^1(C,\, \OO_C)$ with
$H^{0,1}(C)$ combined with the Hodge decomposition. The maps $j_x$ and $\eta_x$ below are
constructed in \eqref{jx} and \eqref{etax} respectively.

\begin{lemma}
The following diagram is commutative:
\begin{equation}\label{cd}
\begin{tikzcd}[column sep=large] 
{} & H^1(C,\,\OO_C) \arrow[hook]{rd}{i_C} & \\
T_x C \arrow{ru}{\psi_x} \arrow{rd}[swap]{-\pet_x} & & H^1(C,\, \C) \\
& H^0(C,\,K_C(2x)) = V_x \arrow[hook]{ru}{j_x} &
\end{tikzcd}
\end{equation}
\end{lemma}

\begin{proof}
The diagram \eqref{cd} is indeed exactly (3.1) 
in \cite[p.~9]{cfg}, because $\psi_x(u)$, $u\, \in\, T_xC$, is the element of $H^0(C,\, 
K_C)^*\,=\, H^1(C,\, \OO_C)$ defined by $\om \,\longmapsto\, \om_x(u)$ for all $\om\, \in\, 
H^0(C,\, K_C)$.
\end{proof}

We have $\eta \,\in \, H^0(C,\,E)$ by Proposition
\ref{prophi}, and hence $\eta$ corresponds to a holomorphic homomorphism 
\begin{equation}\label{eeta2}
\eta\,:\, T_C \,\longrightarrow \, p_*((q^*K_C)\otimes{\mathcal O}_S(2\Delta))
\,=\, p_*((q^*K_C)(2\Delta))\, .
\end{equation}

\begin{prop}
There is a holomorphic homomorphism 
$$j: p_* (q^*K_C(2 \Delta)) \hookrightarrow H^1(C,\, \OO_C) \otimes \OO_C\, ,$$ which is equal to $j_x$ at every
point $x \,\in \,C$, such that the following diagram is commutative
\begin{equation}\label{cd2}
\begin{tikzcd}[column sep=large] 
{} & H^1(C, \,\OO_C)\otimes \OO_C \arrow[hook]{rd}{i} & \\
T_C \arrow[hook]{ru}{\psi} \arrow{rd}[swap]{-\pet} & & H^1(C,\, \C) \otimes \OO_C \\
& p_* (q^*K_C(2 \Delta)) \arrow[hook]{ru}{j} &
\end{tikzcd}
\end{equation}
where $\eta$ is the homomorphism in \eqref{eeta2}, and $p,\, q$ are the projections in
\eqref{ppq}.
\end{prop}

\begin{proof} 
Let $$d\,:\, \OO_S(\Delta)\,
\longrightarrow\, \Omega^1_S\otimes{\mathcal O}_S(2 \Delta) \,=\, p^*K_C(2\Delta) \oplus q^*K_C(2
\Delta)$$ be the de Rham differential. Let
$$T\,:\, \OO_S(\Delta) \,\longrightarrow \,q^*K_C(2 \Delta)$$
be the composition of this homomorphism $d$ with the projection of
$p^*K_C(2\Delta) \oplus q^*K_C(2 \Delta)$ to the second factor.

We now prove the following :

{\bf Claim}.\,\, The kernel of $T$ is the subsheaf $p^{-1}(\OO_C)$. 

It suffices to prove this in terms of local holomorphic coordinates on the curve.
Let $z$ be a locally defined holomorphic coordinate function on $C$; as before, denote $z\circ p$
and $z\circ q$ by $z_1$ and $z_2$ respectively.
Set $$x \,= \,z_2-z_1\,,\ \ y \,=\, z_1 + z_2\, ,$$ and take a local section 
$f \,= \,\frac{a(y)}{x} + b(x,y)$ of $\OO_S(\Delta)$,
where $b$ is holomorphic. We have $$T(f) \,=\,
\frac{\partial f}{\partial z_2} dz_2 \,=\, \left(-\frac{a(y)}{x^2} +
\frac{a'(y)}{x}\right) dz_2 + \frac{\partial b}{\partial z_2}dz_2\, ,$$ and hence
$T(f) \,=\, 0$ if and only if the
following two hold:
\begin{itemize}
\item $a(y)\,\equiv\, 0$, and

\item $\frac{\partial b}{\partial z_2} \,=\,0$.
\end{itemize}
Therefore, $T(f)\,=\, 0$ if and only if $f \,=\, b(z_1)$, which proves the claim. \qed

Consequently, we have the short exact sequence:
\begin{equation}\label{j}
0\,\longrightarrow\, p^{-1}(\OO_C)\,\longrightarrow\,\OO_S(\Delta)\,\stackrel{T}{\longrightarrow}
\, T(\OO_S(\Delta))\, =:\, {\mathcal E} \,\longrightarrow\, 0\, .
\end{equation}

The sheaf ${\mathcal E}$ in \eqref{j} is the kernel of the homomorphism
$$r\,:\, q^*K_C(2 \Delta)\,\longrightarrow\, p^{-1}(\OO_C)$$ constructed as follows:
Let $U \,\subset\, S$ be an open subset
and $\omega \,\in\, H^0(U,\, q^*K_C(2 \Delta))$. If $U\bigcap\Delta\,=\, \emptyset$,
then set $r(\omega)\,=\,0$. If $(z_1,\,z_1) \,\in \,\Delta\bigcap U$, set
$$r(\omega)(z_1) \,:= \,\int_{\gamma_{z_1}} \omega\, ,$$ where $\gamma_{z_1}$
is a small oriented circle around $z_1$ in the fiber $(\{z_1\}\times C)\bigcap
U$. In local holomorphic coordinates, assuming $D_{2 \epsilon} \times D_{2 \epsilon}
\,\subset\, U$, we have $$r(\omega)(z_1) = \int_{|z_2-z_1| \,=\, \epsilon}
\omega\, .$$ If $\omega \,=\, (\frac{a(y)}{x^2} + \frac{b(y)}{x} + c(x,y))
dz_2$, we have
$$
r(\omega)(z_1) \,=\, \int_{|z_2-z_1| \,=\, \epsilon} \left(\frac{a(z_1 + z_2)}{(z_2 - z_1)^2} +
\frac{b(z_1 + z_2)}{z_2 - z_1} + c\right) dz_2
$$
$$
=\, \int_{|z_2-z_1| \,=\, \epsilon} \left(\frac{a(2z_1) +(z_2-z_1)( a'(2z_1) +b(2z_1))}{(z_2-z_1)^2}
 + {\mathcal H}\right) dz_2\,=\, 2 \pi\sqrt{-1} (a'(2z_1) +b(2z_1))\, ,
$$
where $\mathcal H$ is holomorphic.
So $r(\omega) \,\in\, \OO(D_{2 \epsilon})$ and $\text{kernel}(r) \,=\, {\mathcal E}$. 

We will show that
\begin{equation}\label{esh}
p_*{\mathcal E} \,\cong\, p_* (q^*K_C(2 \Delta))
\end{equation}
To prove \eqref{esh}, take an open subset $U \,\subset\, C$ biholomorphic to the disk. To prove that $p_*{\mathcal
E} (U) \,=\, p_* q^* K_C(2\Delta) (U)$, it suffices to show that if $$\om \,\in\,
H^0(U,\, p_* q^* K_C(2\Delta)) \,=\, H^0(U\times C,\, q^*K_C(2\Delta))\, ,$$ then $\om
\,\in\, H^0(U\times C,\, \mathcal{E})$.

Now, if $x \,\in\, U$ is fixed, then $\om (x,\, \cd) \,\in\, H^0(C,\, K_C(2x))$ does not
have residue at $x$, and hence $r(\om)(x) \,= \, 0$. From this it follows that $\om\,\in\,
H^0(U\times C,\, \mathcal{E})$. Hence \eqref{esh} is proved.

Note that $p^{-1}(\OO_C) \,=\, p^{-1}(\OO_C) \otimes_{\C} q^{-1}(\C_{C})$, and by K\"unneth's
formula, \cite[p. 244]{Demailly}, for an open subset $U \,\subset\, C$,
$$H^1(U \times C,\, p^{-1}(\OO_C) \otimes_{\C} q^{-1} (\C_{C})) \,\cong $$
$$(H^0(U,\,\OO_C) \otimes H^1(C,\, \C)) \oplus (H^1(U,\, \OO_C) \otimes H^0(C,\, \C))
\,=\, \OO_C(U) \otimes H^1(C, \,\C)\, ,$$ because $H^1(U,\, \OO_C) \,=\,0$. Consequently, we have
\begin{equation}\label{esh1}
R^1p_*(p^{-1}\OO_C)(U) \,=\, \OO_C(U) \otimes H^1(C,\, \C)\, .
\end{equation}
Using the same method we get that
\begin{equation}\label{esh2}
R^2p_* p^{-1}(\OO_C) \,= \, \OO_C \otimes H^2(C,\,\C) \,= \,\OO_C\, .
\end{equation}
So, applying $p_* $ to the short exact sequence
$$
0\,\longrightarrow\, {\mathcal E}\,\longrightarrow\,
q^*K_C(2\Delta) \,\stackrel{r}{\longrightarrow}\, p^{-1}(\OO_C) \,\longrightarrow\, 0
$$
we conclude that $R^1p_* {\mathcal E}\,\cong\, \OO_C$.

Now we apply $p_*$ to the short exact sequence \eqref{j}. Since $$p_*(p^{-1}(\OO_C))\,=\, p_*
\OO_S(\Delta) \,=\, \OO_C\, ,$$ the following exact sequence is obtained:
\begin{equation}\label{successione2}
0\,\longrightarrow\, p_*{\mathcal E} \,\cong\,p_* (q^*K_C(2 \Delta))\,\stackrel{j}{\longrightarrow}
\, R^1p_*(p^{-1}(\OO_C)) \,\longrightarrow\, R^1p_*(\OO_S(\Delta))
\end{equation}
$$
\longrightarrow\, R^1p_* {\mathcal E} \,\cong\, \OO_C \,\longrightarrow\,
R^2 p_*(p^{-1} (\OO_C))\,\longrightarrow\,0\, .
$$
In view of \eqref{esh1} and \eqref{esh2}, the exact sequence in \eqref{successione2} becomes
$$
0 \,\longrightarrow\,V\,\stackrel{j}{\longrightarrow}\, H^1(C, \,\C) \otimes \OO_C
\,\longrightarrow\,R^1 p_*\OO_S(\Delta) \,\longrightarrow\, 0\, .
$$
Notice that by construction the above homomorphism $j$ at any point $x \,\in\, C$ is the map $j_x$ in
\eqref{cd}. The commutativity of the diagram in \eqref{cd2} follows from the commutativity of
the diagram in \eqref{cd}. 
\end{proof}

\subsection{The relative version}\label{se2.3}

Recall the family of smooth curves $\pi\,:\, {\mathcal C}\,\longrightarrow\, B$ in \eqref{elpi}.
Define the fiber product
\begin{equation}\label{ecs}
{\mathcal S} \,: =\, {\mathcal C} \times_B {\mathcal C}\, ,
\end{equation}
Now consider the map
\begin{equation}\label{psirel}
0 \, \longrightarrow\, T_{\Ci/B} \,\stackrel{\psi}{\longrightarrow}\, R^1\widetilde{p}_* \OO_{\esse}
\end{equation}
which is the dual of the evaluation map $\widetilde{p}_* \widetilde{q}^* K_{\Ci/B}
\, \longrightarrow\, K_{\Ci/B}$ (the maps $\widetilde p$ and $\widetilde q$ are defined in
\eqref{dpqt}).
The restriction of this map to the fiber over any point
of $B$ coincides with $\psi$ in \eqref{cd2} (this re-use of notation should not cause
any confusion).

Consider the variation of Hodge structure for the family
$\widetilde{p}$ in \eqref{dpqt}:
\begin{equation}\label{hes}
0 \,\longrightarrow\, \widetilde{p}_*\widetilde{q}^*K_{{\mathcal C}/B} \,\longrightarrow\,
R^1 \widetilde{p}_* \C_{\esse} \otimes \OO_{\Ci}\,\longrightarrow\, R^1\widetilde{p}_*\OO_{\esse}
\,\longrightarrow\, 0\, ,
\end{equation}
where $\mathcal S$ is defined in \eqref{ecs}. Let
\begin{equation}\label{fi}
i\,:\, R^1\widetilde{p}_* \OO_{\esse}\,\longrightarrow\,
R^1 \widetilde{p}_* \C_{\esse} \otimes \OO_{\Ci}
\end{equation}
be the $C^{\infty}$ splitting of it given by the Hodge decomposition. 
The homomorphism in \eqref{fi} is the relative version of the map $i$ in \eqref{cd2}.

We denote by
\begin{equation}\label{releta}
\eta\,:\, T_{\Ci/B}\, \longrightarrow\, \widetilde{p}_*(\widetilde{q}^* K_{\Ci/B}(2 \bf\Delta_B))
\end{equation}
the ($C^{\infty}$) relative version of the map $\eta$ in \eqref{cd2}, where
$\bf\Delta_B$ is defined in \eqref{edb}.

We have the following:

\begin{prop}\label{diagram-eta}
There is a relative version of the map $j$ in \eqref{cd2},
$$j\,:\, \widetilde{p}_*(\widetilde{q}^* K_{\Ci/B}(2 {\bf{\Delta_B}}))
\,\longrightarrow\, R^1 \widetilde{p}_* \C_{\esse} \otimes \OO_{\Ci} $$ such that 
the diagram
$$
 \begin{tikzcd}[column sep=large] 
 \label{diagramma}
 {} & R^1\widetilde{p}_* \OO_{\esse} \arrow[hook]{rd}{i} & \\
 T_{{\mathcal C}/B} \arrow{ru}{\psi} \arrow{rd}[swap]{-\pet} & & R^1 \widetilde{p}_* \C_{\esse} \otimes \OO_{\Ci} \\
 & \widetilde{p}_*(\widetilde{q}^* K_{\Ci/B}(2 {\bf\Delta}_B)) \arrow[hook]{ru}{j} &
 \end{tikzcd}
$$
is commutative. 
\end{prop}

\begin{proof}
To construct the map $j$ in the relative setting, 
notice that we have $$\Omega^1_{{\mathcal S}/B} \,=\, \widetilde{p}^*K_{\Ci/B}
\oplus\widetilde{q}^*K_{\Ci/B}\, ,$$ so, it can be proved as above that there is a
short exact sequence
\begin{equation}\label{jrel}
0 \, \longrightarrow\, \widetilde{p}^{-1}(\OO_{\Ci}) \, \longrightarrow\,
\OO_{\esse}({\bf\Delta}_B) \,\stackrel{\widetilde{T}}{\longrightarrow}\,
\text{image}(\widetilde{T})\, =:\, \widetilde{{\mathcal E}}
\, \longrightarrow\, 0\, , 
\end{equation}
where $\widetilde{T}$ is the composition of the de Rham differential with the
natural projection $$\Omega^1_{{\mathcal S}/B}(2 {\bf\Delta}_B) \,=\,
\widetilde{p}^*K_{\Ci/B} (2 {\bf\Delta}_B) \oplus \widetilde{q}^*K_{\Ci/B}(2
{\bf\Delta}_B) \, \longrightarrow\, \widetilde{q}^*K_{\Ci/B}(2 {\bf\Delta}_B)\, .$$
Applying $\widetilde{p}_*$ to \eqref{jrel} we get that
$$
0\, \longrightarrow\, \widetilde{p}_*\widetilde{\mathcal E}\, \longrightarrow\,
R^1 \widetilde{p}_* (\widetilde{p}^{-1}(\OO_{\Ci})) \, \longrightarrow\, R^1\widetilde{p}_*
(\OO_{\esse}({\bf\Delta}_B)) \, \longrightarrow\, 0\, .
$$
It can be shown likewise that $$\widetilde{p}_* \widetilde{\mathcal E} \,\cong\,
\widetilde{p}_* (\widetilde{q}^* K_{\Ci/B}(2 {\bf\Delta}_B))\, ,$$ and $R^1
\widetilde{p}_* (\widetilde{p}^{-1}(\OO_{\Ci})) \,\cong\, R^1\widetilde{p}_*\C_{\esse}
\otimes \OO_{\Ci}$, so we have
$$j\,:\, \widetilde{p}_* \widetilde{\mathcal E} \,\cong\, \widetilde{p}_* (\widetilde{q}^*
K_{\Ci/B}(2 {\bf{\Delta_B}})) \, \longrightarrow\,
R^1 \widetilde{p}_* (\widetilde{p}^{-1}(\OO_{\Ci}))
\,\cong \,R^1\widetilde{p}_*\C_{\esse} \otimes \OO_{\Ci}\, .$$

The commutativity of the diagram in the proposition now follows from the commutativity of \eqref{cd2}. 
\end{proof}

We summarize the above result as follows. Let
\begin{equation}\label{hs}
0 \,\longrightarrow\, {\mathcal F}^1\,:=\, \pi_*K_{{\mathcal C}/B} \,\longrightarrow\,
R^1\pi_* \C_{\Ci} \,\longrightarrow\, R^1 \pi_* \OO_{\Ci} \,\longrightarrow\, 0
\end{equation}
be the variation of Hodge structure for the family $\pi$ in \eqref{elpi}. Note that
its pullback
\begin{equation}
\label{hodge}
0 \,\longrightarrow\, \pi^*{\mathcal F}^1 \,\longrightarrow\,
\pi^*(R^1\pi_* \C_{\Ci}) \,\longrightarrow\, \pi^*(R^1 \pi_* \OO_{\Ci}) \,\longrightarrow\, 0\, ,
 \end{equation}
to $\mathcal C$ coincides with \eqref{hes}.
We have the following isomorphisms: 
$$R^1\widetilde{p}_* \OO_{\esse}
\,\cong\, \pi^*(R^1 \pi_* \OO_{\Ci})\ \ \text{ and }\ \ R^1 \widetilde{p}_* \C_{\esse}
\otimes \OO_{\Ci} \,\cong\, \pi^*(R^1 \pi_*\C_{\Ci} \otimes \OO_B )\, .$$
Consider the pull-back of \eqref{hodge} via the map $\psi$:
\begin{equation}\label{hodge1}
\begin{tikzcd}
0\arrow{r}& \pi^* {\mathcal F}^1 \arrow{r} & R^1\widetilde{p}_*\C_{\mathcal S} \otimes
\OO_{\mathcal C} \arrow{r}& R^1\widetilde{p}_*\OO_{\mathcal S} \arrow{r} &0 \\
0\arrow{r}& \pi^* {\mathcal F}^1 \arrow{u}{=} \arrow{r}& {\mathcal H} \arrow[hook]{u}
\arrow{r} & T_{{\mathcal C}/B} \arrow{u}{\psi}\arrow{r} \arrow{lu} {i\circ \psi \,=\,
-j \circ \eta}& 0\\
\end{tikzcd}
\end{equation}
where $i$ is the homomorphism in \eqref{fi},
and ${\mathcal H} \,:= \,j (\widetilde{p}_*(\widetilde{q}^* K_{\Ci/B}(2 {\bf\Delta}_B)))$.
\begin{teo}\label{splitting}
The image of the above $C^\infty$ homomorphism $$-j \circ \eta \,=\, i \circ \psi\, :\,
T_{{\mathcal C}/B}\, \longrightarrow\, R^1\widetilde{p}_*\C_{\mathcal S} \otimes
\OO_{\mathcal C}$$ lies in $\mathcal H$, and it gives
a $C^{\infty}$ splitting of the bottom exact sequence 
in the diagram \eqref{hodge1}.
\end{teo}

\begin{proof}
This follows immediately from Proposition \ref{diagram-eta}.
\end{proof}

\section{A canonical projective structure}\label{se3}

Recall Lemma \ref{lem1} and its set-up.
For any $b\, \in\, B$, let $C\, :=\, \pi^{-1}(b)$ be the fiber.
Consider the section $\teta$ constructed in \eqref{weta}. It is anti-invariant
for the involution of $\mathcal L$ (see Proposition \ref{prophi2}),
and $\teta\vert_\Delta$ coincides with section of
${\mathcal L}\vert_\Delta$ in \eqref{evp}, where $\Delta$ as before is
the diagonal divisor in $C\times C$. Therefore, we obtain a $C^\infty$ section
\begin{equation}\label{be}
\beta^\eta\, :\, B\, \longrightarrow\, \widehat{\mathcal V}\, , \ \ b\, \longmapsto\, \teta\vert_{\Delta_3}\, ,
\end{equation}
where $\Delta_3\, \subset\, \pi^{-1}(b)\times \pi^{-1}(b)$ is the second order infinitesimal
neighborhood of the diagonal $\Delta\, \subset\, C\times C$, and
$\widehat{\mathcal V}$ is the fiber bundle constructed in \eqref{whcv}. Now,
$\beta^\eta(b)$ is a projective structure on $C$, by Lemma \ref{lem1}. So $\beta^\eta$ is a $C^\infty$
family of projective structures on the family of curves ${\mathcal C}\,\longrightarrow\, B$.

Now let
\begin{equation}\label{mgp}
\pi\, :\, {\mathcal C}\, \longrightarrow\, \mg
\end{equation}
be the universal family of curves 
over the moduli space of curves; it exists in the orbifold category. Let
\begin{equation}\label{ePi}
\Pi\, :\, \cS\,:=\, {\mathcal C}\times_{\mg} {\mathcal C}\, \longrightarrow\, \mg
\end{equation}
be the projection from the fiber product. Let
\begin{equation}\label{ee}
E \,:=\, \Pi_*{\mathbb L}
\end{equation}
be the direct image, where $\mathbb L$ is defined in \eqref{mbl}. Let
\begin{equation}\label{weta2}
\widetilde{\eta}\, \in\, C^\infty(\mg,\, E)
\end{equation}
be the $C^\infty$ relative version of the section in \eqref{weta}, so $\widetilde\eta$ corresponds to a 
$C^\infty$ homomorphism $\eta$ constructed as in \eqref{releta}.
This section $\widetilde{\eta}$ should be considered in the orbifold category.

The section $\widetilde{\eta}$ in \eqref{weta2} produces a $C^{\infty}$ section of
the fiber bundle $\widehat{\mathcal V}\, \longrightarrow\,
\mg$ in \eqref{whcv}, simply
by restricting a section of ${\mathbb L}\vert_{C\times C}$ over $C\times C$ to the nonreduced
diagonal ${\Delta}_3\, \subset\, C\times C$. The $C^\infty$ sections of
$\widehat{\mathcal V}$ are in a bijective correspondence with the $C^\infty$ families of 
projective structures on $\mathcal C$ (see Lemma \ref{lem1}). Analogously the $C^\infty$ family
of projective structures given by the uniformization of Riemann surfaces
produces a $C^{\infty}$ section (see \eqref{bu})
\begin{equation}\label{bu2}
\beta^u\, :\,
\mg\, \longrightarrow\, \widehat{\mathcal V}
\end{equation}
in the orbifold category.

\begin{teo}\label{lem-ch}
Assume that $g \,\geq \,3$. The $C^{\infty}$ section $\beta^{\eta}: 
\mg\, \longrightarrow\, \widehat{\mathcal V}$ (see
\eqref{be}) of $\widehat{\mathcal V}$ produced by $\widetilde{\eta}$ in \eqref{weta2}
does not coincide with the section $\beta_u$
of $\widehat{\mathcal V}$ in \eqref{bu2}.
\end{teo}

\begin{proof}
As before, let
\begin{equation}\label{cek}
{\mathcal K}\, :=\, {\mathcal K}_{{\mathcal C}/\mg}\, \longrightarrow\, \mathcal C
\end{equation}
denote the relative canonical bundle for the projection $\pi$ in \eqref{mgp}. Let
$$K_{\cS/\mg}\, \longrightarrow\, \cS$$ be the relative canonical line bundle for the
projection $\Pi$ in \eqref{ePi}.
Define the direct images
$$F\,:=\, \Pi_* K_{\cS/\mg} \ \ \text{ and }\ \ 
F_1\,:=\, \Pi_*(K_{\cS/\mg}\otimes {\mathcal O}_{\cS}({\bf\Delta}_{\mg}))$$
over $\mg$. We have $F\,=\,F_1$, because $$H^0(C\times C,\, K_{C\times C})\,=\, H^0(C\times C,\,
K_{C\times C}\otimes {\mathcal O}_{C\times C}(\Delta))$$
for any compact Riemann surface $C$. Hence there is a short exact sequence of sheaves on $\mg$
\begin{equation}\label{sp2}
0\,\longrightarrow\,F\,\longrightarrow\,E\,\longrightarrow\, \cO_{\mg}\,\longrightarrow\, 0\, ,
\end{equation}
where $E$ is constructed in \eqref{ee}. We note that the projection $E\,\longrightarrow\, \cO_{\mg}$ in
\eqref{sp2} sends the smooth section $\widetilde\eta$ of $E$ in \eqref{weta2} to the constant 
function $1$ on $\mg$.

For $\cS$ in \eqref{ePi}, the involution defined by $(x,\, y)\, \longmapsto\, (y,\, x)$ lifts canonically to both
$K_{\cS/\mg}$ and 
$K_{\cS/\mg}\otimes {\mathcal O}_{\cS}({\bf\Delta}_{\mg})$. These lifts of action produce decompositions
$$F\,=\,F^s\oplus F^a\ \ \text{ and }\ \ E\,=\,E^s\oplus E^a$$ into the symmetric and
anti-symmetric parts; so $F^s$ and $E^s$ are the symmetric parts while $F^a$ and $E^a$ are the
anti-symmetric parts. Note that $F^a\,=\, {\rm Sym}^2({\mathcal F}^1)$, where ${\mathcal F}^1\,\longrightarrow
\,\mg$ is the Hodge bundle defined as in \eqref{hs}. From
\eqref{sp2} we have the short exact sequence
\begin{equation}\label{sq}
0\,\longrightarrow\,F^a\,=\, {\rm Sym}^2({\mathcal F}^1)
\,\longrightarrow\,E^a\,\longrightarrow\, \cO_{\mg}\,\longrightarrow\, 0\, .
\end{equation}
The fiberwise multiplication map $H^0(C,\, K_C)\otimes H^0(C,\, K_C)\, \longrightarrow\,
H^0(C,\, K^{\otimes 2}_C)$
produces a ${\mathcal O}_{\mg}$--linear homomorphism
\begin{equation}\label{hh}
\mathbf{m}\, :\, {\rm Sym}^2 ({\mathcal F}^1) \, \longrightarrow\, \pi_*{\mathcal K}^{\otimes 2}
\,=\, \Omega^{1,0}_{\mg}
\end{equation}
(see \eqref{cek}), which is the dual of the differential of the Torelli map $\tau$ in \eqref{itau}.
This map $\mathbf m$ produces a homomorphism
\begin{equation}\label{hh2}
\mathbf{m}'\, :\, \Omega^{0,1}_{\mg}({\rm Sym}^2 ({\mathcal F}^1)) \, \longrightarrow\,
\Omega^{1,1}_{\mg}\, ,
\end{equation}
by tensoring it with $\text{Id}_{\Omega^{0,1}_{\mg}}$. From \eqref{sq} we have the
commutative diagram
\begin{equation}\label{cdl}
\xymatrix{
 0 \ar[r] & {\rm Sym}^2 ({\mathcal F}^1) \ar[r] \ar[d]^{\mathbf m} &E^a \ar[r]\ar[d]^{\mathbf r}
& \cO_{\mg }
\ar[r]\ar[d]^\cong &0\\
0 \ar[r] & T^\ast{\mg} \ar[r] & {\mathcal V}' \ar[r]& \cO_{\mg }\ar[r] &0}
\end{equation}
where ${\mathcal V}'$ is the subbundle of ${\mathcal V}$ (constructed in \eqref{cvn}) generated 
by $\widehat{\mathcal V}$ defined in \eqref{whcv}, the map $\mathbf r$ is the restriction of
sections to the second order infinitesimal neighborhood of
${\bf\Delta}_{\mg}\, \subset\, \cS$ (see \eqref{ePi}) and $\mathbf m$ is the homomorphism in \eqref{hh}.

From Proposition \ref{prophi2} it follows that
$$
\widetilde{\eta}\, \in\, C^\infty(\mg,\, E^a)\, ,
$$
where $\widetilde{\eta}$ is the section in \eqref{weta2}. Note that the composition of maps
$$
\beta^\eta\, :=\, {\mathbf r}\circ \widetilde{\eta}\, \in\, 
C^\infty(\mg,\, {\mathcal V}')\, ,
$$
where $\mathbf r$ is the restriction homomorphism in \eqref{cdl},
is the $C^\infty$ section $\beta^\eta$ in \eqref{be} for the family $\mathcal C$ parametrized
by $B\,=\, \mg$.

Since the surjective homomorphism in \eqref{sq} sends $\widetilde\eta$ to the function $1$ on 
$\mg$, it follows that $\debar\widetilde\eta$ is a section of $\cA^{0,1} ({\rm Sym}^2 ({\mathcal 
F}^1))$. As the homomorphism $\mathbf r$ in \eqref{cdl} is holomorphic,
from the commutativity of \eqref{cdl} we conclude that
\begin{equation}\label{zc}
\mathbf{m}'(\debar\widetilde\eta)\,=\, \overline\partial{\beta}^\eta\, ,
\end{equation}
where ${\mathbf m}'$ is the homomorphism in \eqref{hh2}; since
$\debar\widetilde\eta$ is a section of $\cA^{0,1} ({\rm Sym}^2 ({\mathcal
F}^1))$, it follows that $\mathbf{m}'(\debar\widetilde\eta)$ is a $(1,\,1)$--form on
$\mg$, while the holomorphicity of the projection ${\mathcal V}'\, \longrightarrow\, {\mathcal O}_{\mg}$
in \eqref{cdl} implies that $\overline\partial{\beta}^\eta$ is a $(1,\,1)$--form on $\mg$.

Take a hyperelliptic curve $C\, \in\, \mg$, and take any nonzero $v\, \in\, T_C\mg$ that is
sent to $-v$ by the hyperelliptic involution $\xi$ of $C$; since $g\, \geq\, 3$, such
a $(1,\, 0)$ tangent vector $v$ exists. It can be shown that
$$
(\debar\widetilde\eta (C))(\overline{v})\,=\, 0\, .
$$
Indeed, $\debar\widetilde\eta\, \in\, C^\infty(\mg,\, \Omega^{0,1}_{\mg}
{\rm Sym}^2 ({\mathcal F}^1))$ defines a $C^\infty$ homomorphism
$$
\debar\widetilde\eta (C)\, :\, T^{0,1}\mg \, \longrightarrow\, {\rm Sym}^2 ({\mathcal F}^1)
$$
(the notation is re-used). Since the hyperelliptic involution 
$\xi$ acts trivially on the fiber $({\rm Sym}^2 ({\mathcal F}^1))_C$ and the homomorphism
$\debar\widetilde\eta (C)$ is $\xi$-invariant, it follows that
$$(\debar\widetilde\eta (C))(\overline{v})\,=\, 0\, .$$
Therefore, from \eqref{zc} it follows that $(\overline\partial{\beta}^\eta(C))(\overline{v})\,=\, 0$.
Hence the $(1,1)$--form $\overline\partial{\beta}^\eta$ fails to be positive at $C$. This implies
that $\overline\partial{\beta}^\eta$ is not a nonzero scalar multiple of the
Weil--Petersson $\omega_{wp}$ form on $\mg$, because the form $\omega_{wp}$ is K\"ahler. Consequently,
from \eqref{wp} we conclude that the section $\beta^\eta$ of
$\widehat{\mathcal V}$ produced by $\widetilde{\eta}$ does not coincide with the section
$\beta^u$ in \eqref{bu2} constructed using the uniformization of Riemann surfaces.
\end{proof}

\begin{teo}\label{thm3}
Consider the projective 
structure given by the uniformization of Riemann surfaces. Let $\beta^u\, \in\, 
C^\infty(\mg,\, {\mathcal V}')$ be the corresponding section
(as in \eqref{bu2}). There is no $C^\infty$ section 
$\gamma\, :\, \mg\, \longrightarrow\, E^a$ such that ${\mathbf r}(\gamma)\,=\, \beta^u$, 
where $\mathbf r$ is the restriction map in \eqref{cdl}.
\end{teo}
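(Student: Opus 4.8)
The plan is to argue by contradiction: suppose such a $C^\infty$ section $\gamma\, :\, \mg\, \longrightarrow\, E^s$ with ${\mathbf r}(\gamma)\,=\, \beta^u$ exists. The idea is to feed $\gamma$ through the same commutative diagram \eqref{cdl} that governs $\widetilde\eta$, and to compare the resulting $(1,1)$-forms with what is known about $\beta^u$ from Takhtajan--Zograf \eqref{wp} and about $\beta^\eta$ from Lemma \ref{lem-ch}. First I would observe that, since the surjection $E^s\,\longrightarrow\, \cO_{\mg}$ in \eqref{sq} sends $\beta^u$-lifting sections to a holomorphic section of $\cO_{\mg}$ (namely the constant $1$, because $\widetilde\sigma\vert_{2\Delta}$ is the canonical trivialization and ${\mathbf r}(\gamma)=\beta^u$ forces agreement on $2{\bf\Delta}_{\mg}$), the antiholomorphic derivative $\debar\gamma$ lands in $\cA^{0,1}{\rm Sym}^2({\mathcal F}^1)$, exactly as for $\debar\widetilde\eta$. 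Then, exactly as in the derivation of \eqref{zc}, the commutativity of \eqref{cdl} together with the holomorphy of $\mathbf r$ gives
\begin{equation}\label{pp1}
{\mathbf m}'(\debar\gamma)\,=\, \overline\partial\beta^u\,=\, \Gamma^*\omega_{wp}
\end{equation}
for the universal family over $B\,=\,\mg$, where the last equality is \eqref{wp}.

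Next I would exploit the hyperelliptic vanishing argument already used in the proof of Lemma \ref{lem-ch}. Take a hyperelliptic curve $C\,\in\,\mg$ (here $g\,\geq\,3$, which is the genus range in which the preceding lemmas operate; I would flag this hypothesis explicitly) and a nonzero tangent vector $v\,\in\,T_C\mg$ with $\xi_*v\,=\,-v$ for the hyperelliptic involution $\xi$. Because $\xi$ acts trivially on $({\rm Sym}^2({\mathcal F}^1))_C$ — the holomorphic $1$-forms on a hyperelliptic curve are anti-invariant, so their symmetric squares are invariant — and because the $C^\infty$ homomorphism $\debar\gamma(C)\,:\,T^{0,1}_C\mg\,\longrightarrow\,{\rm Sym}^2({\mathcal F}^1)_C$ is $\xi$-equivariant, we get $(\debar\gamma(C))(\overline v)\,=\,0$. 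Applying ${\mathbf m}'$ and using \eqref{pp1} yields $(\Gamma^*\omega_{wp}(C))(\overline v,\,\cdot\,)\,=\,0$, i.e. the Weil--Petersson form degenerates in the direction $v$ at $C$. This contradicts the fact that $\omega_{wp}$ is a K\"ahler form on $\mg$ (hence positive definite, even as an orbifold form, at every point including hyperelliptic ones). That contradiction finishes the proof.

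The main obstacle — and the place where I would be most careful — is the first step: verifying that the hypothetical lift $\gamma$ really maps to the constant function $1$ under $E^s\,\longrightarrow\,\cO_{\mg}$, so that $\debar\gamma$ is forced into $\cA^{0,1}{\rm Sym}^2({\mathcal F}^1)$ rather than merely into $\cA^{0,1}E^s$. This hinges on the precise meaning of ${\mathbf r}(\gamma)\,=\,\beta^u$: one must unwind that $\beta^u$, as a section of ${\mathcal V}'$, has the property that its image in $\cO_{\mg}$ under the bottom row of \eqref{cdl} is $1$ (this encodes that any projective-structure section restricts on $2{\bf\Delta}_{\mg}$ to the canonical trivialization, cf. the definition of $\widehat{\mathcal V}$ and Lemma \ref{lem1}), and then chase this back up the right-hand column of \eqref{cdl}, whose vertical map is an isomorphism. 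Once that bookkeeping is pinned down, everything else is a direct transcription of the argument in Lemma \ref{lem-ch}; the only genuinely new ingredient is the replacement of ``$\overline\partial\beta^\eta$ is not a scalar multiple of $\omega_{wp}$'' by ``$\overline\partial\beta^u$ literally \emph{is} $\omega_{wp}$, which is impossible to obtain as ${\mathbf m}'$ of something that vanishes on $\overline v$.''
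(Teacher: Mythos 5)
Your argument is correct and is exactly the route the paper intends: the published proof simply says the statement ``follows from the proof of Lemma \ref{lem-ch} in a straight-forward way,'' and your write-up supplies precisely those details (the diagram chase through \eqref{cdl} forcing $\debar\gamma$ into $\cA^{0,1}{\rm Sym}^2({\mathcal F}^1)$, the identity ${\mathbf m}'(\debar\gamma)\,=\,\debar\beta^u\,=\,\Gamma^*\omega_{wp}$, and the hyperelliptic $\xi$-equivariance argument contradicting the positivity of the Weil--Petersson form), including the correct flag that the hyperelliptic step needs $g\,\geq\,3$.
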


\begin{proof}
This follows from the proof of Theorem \ref{lem-ch} in a straight-forward way. We omit
the details.
\end{proof}

Let $$\varphi\,:\, {\mathcal U} \,\longrightarrow\, \ag$$ be the universal family
of principally polarized abelian varieties. Let
\begin{equation}\label{abelian}
0 \,\longrightarrow\, {\mathcal F}^1 \,\longrightarrow\, R^1\varphi_* \C_{\mathcal U} \otimes
\OO_{\ag}\,\longrightarrow\,R^1 \pi_* \OO_{{\mathcal U}}\,\cong\,
({\mathcal F}^1)^{\vee} \,\longrightarrow\, 0
\end{equation}
be the exact sequence for the Hodge filtration. The notation ${\mathcal F}^1$ is re-used;
note that ${\mathcal F}^1$ in \eqref{hs} is the pullback of ${\mathcal F}^1$ in
\eqref{abelian} by the map $\tau$ in \eqref{itau}. Let
\begin{equation}\label{ei}
i\,:\, R^1 \pi_* \OO_{{\mathcal U}} \,=\, ({\mathcal F}^1)^\vee \,\longrightarrow\,
R^1\varphi_* \C_{\mathcal U} \otimes \OO_{\ag}
\end{equation}
be the $C^{\infty}$ splitting of \eqref{abelian} given by the Hodge decomposition.

Tensoring \eqref{abelian} with ${\mathcal F}^1$, we get
\begin{equation}\label{echi}
0 \,\longrightarrow\, {\mathcal F}^1 \otimes {\mathcal F}^1 \,\longrightarrow\,
(R^1\varphi_* \C_{\mathcal U}) \otimes {\mathcal F}^1 \,\stackrel{\chi}{\longrightarrow}\,
({\mathcal F}^1)^{\vee}\otimes {\mathcal F}^1 \,\longrightarrow\, 0\, .
\end{equation}
Let $$i'\,=\, i\otimes {\rm Id}_{{\mathcal F}^1} \,:\,
({\mathcal F}^1)^{\vee}\otimes {\mathcal F}^1 \,\longrightarrow\,
R^1\varphi_* \C_{\mathcal U} \otimes {\mathcal F}^1$$ be the $C^{\infty}$ splitting. 
Define the homomorphism
$$
s\, :\, {\OO}_{\ag} \,\longrightarrow\, ({\mathcal F}^1)^{\vee}\otimes{\mathcal F}^1
\,=\, {\rm End}({\mathcal F}^1) \, , \ \ c\, \longmapsto\, c\cdot\text{Id}\, .
$$
Define
$$
{\mathcal G}\,:=\, \chi^{-1}(s({\OO}_{\ag}))\, \subset\, 
(R^1\varphi_* \C_{\mathcal U}) \otimes {\mathcal F}^1\, ,
$$
where $\chi$ is the projection in \eqref{echi}. Now we have the commutative diagram
\begin{equation}\label{ds4}
\begin{tikzcd}
0\arrow{r}& {\mathcal F}^1 \otimes {\mathcal F}^1 \arrow{r} & R^1\varphi_* \C_{\mathcal U} \otimes {\mathcal F}^1 \arrow{r}& ({\mathcal F}^1)^{\vee} 
\otimes {\mathcal F}^1 \arrow{r} &0 \\
0\arrow{r}& {\mathcal F}^1 \otimes{\mathcal F}^1\arrow{u}{=}
\arrow{r}& {\mathcal G} \arrow[hook]{u} \arrow{r} & {\OO}_{\ag} \arrow[hook]{u}{s}\arrow{r}
\arrow{lu} {i'\circ s}& 0\\
\end{tikzcd}
\end{equation}
The image of $i' \circ s$ clearly lies in $\mathcal G$, and the $C^\infty$ homomorphism
$i' \circ s\,:\, {\OO}_{\ag} \,\longrightarrow\,{\mathcal G}$
is a $C^\infty$ splitting of the bottom exact sequence in \eqref{ds4}.
Taking the quotient by $\bigwedge^2 {\mathcal F}^1 $ of the bottom exact sequence in \eqref{ds4}
yields the exact sequence 
\begin{equation}\label{siegel}
0\,\longrightarrow\, {\rm Sym}^2({\mathcal F}^1) \,\longrightarrow\,
{\mathcal G}^+\,:=\, {\mathcal G}/(\bigwedge\nolimits^2 {\mathcal F}^1)
\,\stackrel{f}{\longrightarrow}\, {\OO}_{\ag} \,\longrightarrow\, 0\, ;
\end{equation}
it has a $C^{\infty}$ splitting
$$\sigma\,:=\, q^1\circ i'\circ s \,:\, {\OO}_{\ag} \,\longrightarrow\,{\mathcal G}^+\, ,$$
where $q^1\, :\, {\mathcal G}\, \longrightarrow\, {\mathcal G}^+$ is the projection.

Since the homomorphism $f$ in \eqref{siegel} is holomorphic, for the section $h\,:=\, \sigma(1)$
of ${\mathcal G}^+$,
$$f (\overline{\partial}h) \,=\, \overline{\partial} (f \circ h) \,=\, 
\overline{\partial}(1) \,=\, 0\, ,$$
and consequently from \eqref{siegel} it follows that
\begin{equation}\label{fom}
\omega\,:=\, \overline{\partial}h
\end{equation}
is a $(0,1)$-form on $\ag$ with values in
${\rm Sym}^2 ({\mathcal F}^1) \,=\, T^*\ag$. In other words, $\omega$ is a $(1,1)$--form on $\ag$.

\begin{prop}\label{props}
The $(1,1)$--form $\omega$ in \eqref{fom} is a nonzero constant scalar multiple of
the K\"ahler form $\omega_S$ for the Siegel metric on $\ag$. 
\end{prop}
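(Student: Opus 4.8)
The strategy is to use that $\omega$ is built by a construction which is entirely canonical on $\ag$, hence $\mathrm{Sp}(2g,\bZ)$-equivariant, and therefore descends from an $\mathrm{Sp}(2g,\R)$-invariant object on the Siegel upper half space $\mathsf H_g$. First I would pull back the whole diagram along the quotient map $\mathsf H_g\,\longrightarrow\,\ag$. Over $\mathsf H_g$ the local system $R^1\varphi_*\C_{\mathcal U}$ becomes the trivial bundle with fibre the standard $\mathrm{Sp}(2g,\R)$-module, the Hodge sub-bundle ${\mathcal F}^1$ of \eqref{abelian} together with its natural Hodge metric is $\mathrm{Sp}(2g,\R)$-equivariant, and hence so is the $C^\infty$ splitting $i$ of \eqref{ei}. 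Consequently the twisted sequence \eqref{echi}, the splitting $i'=i\otimes\mathrm{Id}_{{\mathcal F}^1}$, the subsheaf ${\mathcal G}$, the quotient sequence \eqref{siegel}, the splitting $\sigma$, the section $h=\sigma(1)$ and finally $\omega=\overline{\partial}h$ are all $\mathrm{Sp}(2g,\R)$-invariant; thus $\omega$ pulls back to an $\mathrm{Sp}(2g,\R)$-invariant $(1,1)$-form on $\mathsf H_g$.

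Next I would invoke that $\mathsf H_g=\mathrm{Sp}(2g,\R)/U(g)$ is an irreducible Hermitian symmetric space, whose isotropy group $U(g)$ acts on the holomorphic tangent space $\mathrm{Sym}^2\C^g$ irreducibly. By Schur's lemma the space of $\mathrm{Sp}(2g,\R)$-invariant $(1,1)$-forms on $\mathsf H_g$ is therefore one-dimensional over $\C$, and it is spanned by (the pullback of) $\omega_S$. Hence $\omega=c\cdot\omega_S$ on $\ag$ for some $c\in\C$, and it only remains to show $c\neq 0$; for this it suffices to show $\omega\not\equiv 0$.

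For the nonvanishing I would unwind $\omega=\overline{\partial}h$ directly. Working over $\mathsf H_g$ (or in an orbifold chart), choose a holomorphic frame $e_1,\dots,e_g$ of ${\mathcal F}^1$, with dual frame $e_1^*,\dots,e_g^*$ of $({\mathcal F}^1)^\vee$. Then $s(1)=\mathrm{Id}_{{\mathcal F}^1}=\sum_i e_i^*\otimes e_i$, so $h=q^1\bigl(i'(s(1))\bigr)=q^1\bigl(\sum_i i(e_i^*)\otimes e_i\bigr)$; since $q^1$, the $e_i$ and the quotient map in \eqref{abelian} are holomorphic, $\overline{\partial}h=\sum_i \overline{\partial}\bigl(i(e_i^*)\bigr)\cdot e_i$, the product being taken in $\mathrm{Sym}^2{\mathcal F}^1$. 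Moreover $\overline{\partial}\bigl(i(e_i^*)\bigr)\in{\mathcal A}^{0,1}({\mathcal F}^1)$ because the holomorphic quotient map in \eqref{abelian} sends it to $\overline{\partial}e_i^*=0$; and this form is, up to sign, the value on $e_i^*$ of the second fundamental form $\overline{\theta}$ of ${\mathcal F}^1$ inside $R^1\varphi_*\C_{\mathcal U}\otimes\OO_{\ag}$, i.e. the complex conjugate of the Higgs field of \eqref{abelian}. Writing $\overline{\theta}(e_i^*)=\sum_j\overline{\theta}_{ij}e_j$ with $(0,1)$-form entries $\overline{\theta}_{ij}$, we get $\omega=\pm\sum_{i,j}\overline{\theta}_{ij}\,e_ie_j$. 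Now $\overline{\theta}_{ij}=\overline{\theta}_{ji}$, because for the polarized weight-one variation \eqref{abelian} the Higgs field is symmetric — this is exactly the canonical identification $T\ag=\mathrm{Sym}^2(({\mathcal F}^1)^\vee)$. Hence $\omega$ vanishes identically only if $\overline{\theta}$ does; but the Kodaira--Spencer map of the universal abelian variety over $\ag$ is everywhere an isomorphism onto $\mathrm{Sym}^2(({\mathcal F}^1)^\vee)$, so $\theta$, and therefore $\overline{\theta}$, is nowhere zero. Thus $\omega\neq 0$, so $c\neq 0$. (Alternatively one verifies $\omega\neq 0$ by the explicit computation at a point $Z_0\in\mathsf H_g$, say $Z_0=\sqrt{-1}\,I_g$, using the explicit Hodge decomposition there, where $\omega$ becomes a nonzero multiple of $\mathrm{tr}\bigl((\mathrm{Im}\,Z)^{-1}\,dZ\,(\mathrm{Im}\,Z)^{-1}\,d\overline{Z}\bigr)$.)

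I expect the only delicate point to be the last paragraph: the equivariance and the Schur-lemma steps are formal, whereas correctly tracing $\omega=\overline{\partial}(\sigma(1))$ through the splittings $i$ and $i'$, the preimage ${\mathcal G}$ and the projection $q^1$, keeping the $\mathrm{Sym}^2$ and $\bigwedge^2$ factors and the sign conventions straight, and then recognising the outcome as the (nonzero, indeed definite) conjugate Higgs-field pairing that represents $\omega_S$, is where the actual work lies.
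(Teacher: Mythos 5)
Your proposal is correct and follows essentially the same route as the paper: the paper likewise notes that the construction of $\omega$ is canonical, so its pullback to ${\mathsf H}_g$ is ${\rm Sp}(2g,\R)$--invariant and hence a constant multiple of $\omega_S$, with the constant nonzero because the Hodge splitting $i$ in \eqref{ei} is not holomorphic. Your Schur--lemma step and the explicit identification of $\overline{\partial}h$ with the symmetric, nowhere-vanishing conjugate Higgs-field pairing merely spell out details the paper leaves implicit.
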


\begin{proof}
From the construction of $\omega$ it follows that the pullback of $\omega$ to the
Siegel space ${\mathsf H}_g$ is preserved by the action of 
${\rm Sp}(2g, \R)$ on ${\mathsf H}_g$. This implies that $\omega$ is a constant scalar multiple
of the K\"ahler form $\omega_S$ on $\ag$. This scalar factor is nonzero because
the Hodge decomposition $i$ in \eqref{ei} is not holomorphic.
\end{proof}

\begin{teo}\label{thms}\mbox{}
\begin{enumerate}
\item For the $C^\infty$ section $\beta^\eta\, :\, \mg\, \longrightarrow\, \widehat{\mathcal V}$ in \eqref{be},
$$
\overline{\partial} (\beta^\eta)\,=\, \tau^*\omega\, ,
$$
where $\omega$ and $\tau$ are constructed in \eqref{fom} and \eqref{itau} respectively.

\item The $(1,1)$-form $\overline{\partial} (\beta^\eta)$ is a nonzero constant scalar multiple
of $\tau^*\omega_S$, where $\omega_S$ for the Siegel metric on $\ag$.
\end{enumerate}
\end{teo}

\begin{proof}
The second part of the theorem follows immediately from the combination of the first part and
Proposition \ref{props}. We will now prove the first part.

First tensoring the diagram in \eqref{hodge1} with the relative canonical bundle
${\mathcal K}\,:=\, K_{{\mathcal C}/\mg}$, we get
$$
\begin{tikzcd}
0\arrow{r}& \pi^* {\mathcal F}^1\otimes{\mathcal K}
\arrow{r} & R^1\widetilde{p}_*\C_{\mathcal S} \otimes{\mathcal K} \arrow{r}& R^1
\widetilde{p}_*\OO_{\mathcal S} \otimes{\mathcal K}\arrow{r} &0 \\
0\arrow{r}& \pi^* {\mathcal F}^1 \otimes{\mathcal K}\arrow{u}{=} \arrow{r}& {\mathcal H} 
\otimes {\mathcal K}\arrow[hook]{u} \arrow{r} & \OO_{\mathcal C}\arrow{u}{\psi \otimes
{\rm Id}_{\mathcal K}}\arrow{r} \arrow{lu} & 0\\
\end{tikzcd}
$$
together with a $C^{\infty}$ splitting $\OO_{\mathcal C}\,\longrightarrow\,
{\mathcal H} \otimes{\mathcal K}$ given by Corollary \ref{splitting}; here
${\mathcal S}$ and $\widetilde p$ are in \eqref{ePi} and \eqref{dpqt} respectively.
Now applying $\pi_{*}$ yields 
$$
\begin{tikzcd}
0\arrow{r}& {\mathcal F}^1 \otimes {\mathcal F}^1\arrow{r} & R^1\pi_*\C_{\mathcal C} \otimes
{\mathcal F}^1 \arrow{r}& ({\mathcal F}^1)^{\vee} \otimes{\mathcal F}^1 \arrow{r} &0 \\
0\arrow{r}& {\mathcal F}^1 \otimes{\mathcal F}^1 \arrow{u}{=} \arrow{r}&
\pi_*({\mathcal H}\otimes{\mathcal K}) \cong \Pi_* (K_{{\mathcal S}/\mg}(2\Delta_{\mg}))\arrow[hook]{u} \arrow{r}
& \OO_{\mg}\arrow{u}\arrow{r} \arrow{lu} & 0\\
\end{tikzcd}
$$
where the right vertical map is $c\, \longmapsto\, c\cdot\text{Id}$.
Taking quotient of the bottom exact sequence by $\bigwedge^2{\mathcal F}^1$
produces the exact sequence 
\begin{equation}\label{ec}
0 \, \longrightarrow\, {\rm Sym}^2({\mathcal F}^1) \, \longrightarrow\, (\Pi_* (K_{{\mathcal S}/\mg}
(2\Delta_{\mg})))^+ 
\end{equation}
$$
:=\, \Pi_* (K_{{\mathcal S}/\mg}(2\Delta_{\mg}))/(\bigwedge\nolimits^2{\mathcal F}^1)
\, \longrightarrow\, \OO_{\mg}\, \longrightarrow\, 0 
$$
which has a $C^{\infty}$ splitting $ \OO_{\mg}
\, \longrightarrow\, ( \Pi_* (K_{{\mathcal S}/\mg}(2\Delta_{\mg})))^+$ given by $\widetilde{\eta}$
in \eqref{weta2}.
Composing with $$(d\tau)^*\, :\, {\rm Sym}^2 ({\mathcal F}^1) \,\longrightarrow\,
\Omega^{1,0}_{\mg}\, ,$$ where $d\tau$ is the differential of the
Torelli map in \eqref{itau}, we obtain the diagram 
\begin{equation}\label{ec2}
\begin{tikzcd}
0\arrow{r}& {\rm Sym}^2({\mathcal F}^1) \arrow{d} \arrow{r} &
(\Pi_* (K_{{\mathcal S}/\mg}
(2\Delta_{\mg})))^+\arrow{r}\arrow{d}& \OO_{\mg}\arrow{d} {=} \arrow{r} &0 \\
0\arrow{r}& T^*_{\mg} \arrow{r}& {\mathcal V}' \arrow{r} &\OO_{\mg}\arrow{r}& 0\\
\end{tikzcd}
\end{equation}
where ${\mathcal V}'\,\subset\,{\mathcal V}$ is the subbundle in \eqref{cdl} generated by 
$\widehat{\mathcal V}$. We note that diagram in \eqref{ec2} coincides with the one in 
\eqref{cdl}. The bottom exact sequence in \eqref{ec2} admits a $C^{\infty}$ 
splitting $\OO_{\mg} \,\longrightarrow\,{\mathcal V}'$ given by $\beta^\eta$ in \eqref{be}.
The $(1,1)$--form $\overline{\partial} (\beta^\eta)$ over $\mg$ coincides, by construction, 
with $\tau^*\omega$, where $\omega$ and $\tau$ are constructed in \eqref{fom} and 
\eqref{itau} respectively.
\end{proof}

\begin{remark}\label{rgenus2}
Using Theorem \ref{thms}(2) it can be deduced that Theorem
\ref{lem-ch} remains valid for $g\,=\,2$. Indeed, in view of
Theorem \ref{thms}(2), it suffices to prove that
the $(1,1)$--form $\tau^*\omega_S$, where $\omega_S$ is the Siegel $(1,1)$--form on
$\mathsf{A}_2$ and $\tau$ is the map in \eqref{itau}, is not a constant scalar
multiple of the Weil--Petersson K\"ahler form $\omega_{wp}$ on $\mathsf{M}_2$.
To prove that $\tau^*\omega_S$ is not a constant scalar multiple of $\omega_{wp}$,
note that $\tau^*\omega_S$ extends smoothly when a one-parameter family of smooth curves
of genus $2$ degenerates to a reducible stable curve (the limit is two elliptic
curves touching at a point). On the other hand, a theorem of Masur says that
the Weil--Petersson blows up in such a situation (see \cite[p.~624, Theorem 1]{Ma}).
Therefore, $\tau^*\omega_S$ is not a constant scalar multiple of $\omega_{wp}$.
\end{remark}

\section{The class of the intrinsic form}\label{se4}

\subsection{Differentials of the second type on a surface}\label{se4.1}

In this subsection we recall some definitions and results on meromorphic differentials on 
surfaces that will be used in determining the cohomology class of the form 
$\widehat{\eta}$.

Let $S$ be a smooth complex projective surface.
Let $D\,\subset\, S$ be a smooth curve. The vector bundle $\Omega^j_S\otimes {\mathcal O}_S(nD)$
will be denoted by $\Omega^j(nD)$.

Let $\cA^{p,q}$ be the sheaf of smooth differential forms of type $(p,\,q)$ on $S.$
Let $$\cA^m\,=\,\bigoplus _{p+q=m} \cA^{p,q}$$ be the sheaf of the $m$--forms,
with $\cC^\infty_S\,=\,\cA^0\,=\,\cA^{0,0}.$
Let $\cA^{p,q}(nD)$ be the sheaf having a pole of order at most $n$ on $D$; more precisely,
if $x\,=\,0$ is a local equation of $D$ on $U \bigcap D$ then
$$\omega\,\in\, \cA^{p,q}(nD)(U)\,\iff\, x^n\omega\,\in\, \cA^{p,q}(U).$$
Now define $\cA^m(D)\,=\,\bigoplus _{p+q=m} \cA^{p,q}(D).$
We consider the complexes
\begin{equation}\label{smooth}
\cC^\infty\,\stackrel{d}{\longrightarrow}\, \cA^{1,0}(D)\oplus \cA^{0,1}
\,\stackrel{d}{\longrightarrow}\, \cA^{2}(2D)
\end{equation}
\begin{equation}
\cO_S \,\stackrel{d}{\longrightarrow}\, \Omega^1(\log D)
\,\stackrel{d}{\longrightarrow}\,\Omega^2(D) \label{hol}
\end{equation}
and
\begin{equation}
\cO_S\,\stackrel{d}{\longrightarrow}\, \Omega^1(D)\,\stackrel{d}{\longrightarrow}\,
\Omega^2(2D)\, .\label{hol2}
\end{equation}

\begin{lemma}
The cohomology sheaves of all the above complexes are isomorphic to $$\bC_D\,=\,j_\ast \bC\, ,$$
where $j\,:\,D\,\stackrel{\longrightarrow}\,S$ is the inclusion map.
\end{lemma}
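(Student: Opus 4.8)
The plan is to prove that for each of the three complexes the cohomology sheaf is $\bC_D\,=\,j_*\bC$ by a purely local computation, reducing to the case where $S$ is a bidisc and $D$ is a coordinate hyperplane. Since all the sheaves involved are either coherent or fine (the $\cA^{p,q}(nD)$ are modules over $\cC^\infty_S$, hence soft), the statement is local; so I would choose local coordinates $(x,y)$ on an open $U$ in which $D=\{x=0\}$ and compute the stalk cohomology at a point, distinguishing the point $p\in D$ from $p\notin D$. Away from $D$ all three complexes are just the ordinary (smooth, resp. holomorphic via the holomorphic Poincar\'e lemma) de Rham complex of a $2$-dimensional polydisc, whose cohomology is $\bC$ in degree $0$ and $0$ elsewhere, matching $(j_*\bC)_p=0$ for the holomorphic ones and $\bC$ only in degree $0$ for the smooth one. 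The content is therefore entirely at points of $D$.

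At a point of $D$ I would treat the smooth complex \eqref{smooth} first, as the model case. A form of degree $1$ is $\omega = f\,dx + g\,dy$ with $f\in\cA^{0,0}(1\cdot D)$ having at worst a simple pole in $x$ and $g\in\cA^{0,0}$ smooth; writing $f = a/x + (\text{smooth})$ one checks that $d\omega=0$ forces, after separating the principal part, that the residue $a = a(y,\bar y)$ is a smooth function of $y$ alone and that $\partial_{\bar x}(xf)$, $\partial_{\bar x}g$, and the remaining $dx\wedge dy$, $dx\wedge d\bar y$, $\ldots$ components vanish appropriately; the upshot is that $\omega$ is closed iff it is, modulo an exact smooth $1$-form $du$, equal to a constant multiple of $\tfrac{1}{2\pi\sqrt{-1}}\tfrac{dx}{x}$ plus a $\bar\partial$-closed correction that one kills with the $\bar\partial$-Poincar\'e lemma in one variable. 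That residue constant is precisely the image of a locally constant section of $\bC$ supported on $D$, which identifies $\mathcal H^1$ with $\bC_D$; the solvability of the equations $\partial_{\bar x}(\cdot)=(\text{given smooth datum})$ uses the Cauchy integral / Dolbeault-Grothendieck lemma and is what makes $\mathcal H^2=0$ and $\mathcal H^0=\bC$ in the obvious way.

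For the holomorphic complexes \eqref{hol} and \eqref{hol2} the same bookkeeping is cleaner because there is no $\bar\partial$ to solve: here the relevant tool is the holomorphic Poincar\'e lemma together with the standard computation of the de Rham complex with logarithmic poles along a smooth divisor. For \eqref{hol}, $\Omega^1(\log D)$ is locally free on $\tfrac{dx}{x}, dy$, and a closed section $h_1\tfrac{dx}{x}+h_2\,dy$ with $h_1,h_2$ holomorphic is, by $dh_1\wedge\tfrac{dx}{x}+\ldots=0$, forced to have $h_1$ independent of $x$, and one factors out $h_1(0,y)\tfrac{dx}{x}$ (this is the residue) plus an exact holomorphic $1$-form; this yields $\mathcal H^1(\text{\eqref{hol}})=\bC_D$, while $\mathcal H^0$ is obviously the holomorphic functions killed by $d$ that extend across $D$, i.e. $\bC$ away from issues, but in fact $\mathcal H^0$ of the whole complex is $\bC$ on $S\setminus D$ and, reading $j_*\bC$, zero — wait, here I must be careful: $\mathcal H^0$ of \eqref{hol} is $\ker(d:\cO_S\to\Omega^1(\log D)) = \bC_S$, which is \emph{not} $j_*\bC$; so the correct statement being proved must be that the full complex \eqref{hol}, viewed as a resolution, is quasi-isomorphic to $j_*\bC$ sitting in some degree, or more likely the lemma means the \emph{higher} cohomology sheaves of the two-term tail, or there is an implicit shift. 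I would reconcile this by noting the intended reading: the complexes \eqref{smooth}, \eqref{hol}, \eqref{hol2} are truncated at the term computing $H^1$ of the punctured surface, and ``cohomology sheaves isomorphic to $\bC_D$'' refers to the single nontrivial sheaf obtained as $\ker/\operatorname{im}$ at the middle spot after quotienting by the trivial $\bC_S$ at the left — i.e. the complexes are exact except for a copy of $\bC_D$, so that they compute $R^\bullet j_*\bC$ in the appropriate sense.

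The main obstacle will be the local analysis at $D$ in the smooth case: correctly extracting the residue as the only obstruction and verifying, via the one-variable $\bar\partial$-Poincar\'e lemma applied fibrewise in $x$ with smooth dependence on $(y,\bar y)$, that every closed form with vanishing residue is exact within the given pole orders. Once that is done, the holomorphic cases \eqref{hol} and \eqref{hol2} follow by the same residue computation with the $\bar\partial$-steps replaced by the holomorphic Poincar\'e lemma, and the comparison between \eqref{hol} and \eqref{hol2} is the elementary observation that $\tfrac{dx}{x}$ and $\tfrac{1}{x}\,dx$ differ trivially while $\Omega^1(\log D)\subset\Omega^1(D)$ induces an isomorphism on the relevant cohomology sheaf because the extra pole room in \eqref{hol2} contributes nothing closed modulo exact. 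I would also remark that the comparison maps between the three complexes (inclusion of holomorphic into smooth forms, and $\Omega^1(\log D)\hookrightarrow\Omega^1(D)$) are compatible with the residue, so the three identifications with $\bC_D$ agree, which is what is needed later to compute the class of $\widehat\eta$.
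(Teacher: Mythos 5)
Your overall strategy is the paper's own: the statement is local, and the paper's proof is exactly the coordinate computation you outline for the smooth complex \eqref{smooth} --- write a closed germ in coordinates with $D=\{x=0\}$, extract the residue along $D$, show that closedness forces it to be locally constant, and show that what is left is locally exact, so that the cohomology sheaf at the middle term is $\bC_D$ via the residue map (the holomorphic cases \eqref{hol}, \eqref{hol2} are left implicit there, and your treatment of them by the same residue argument plus the holomorphic Poincar\'e lemma is the natural completion). Your reading of the statement is also the intended one: what is proved and later used is the cohomology sheaf at the middle spot, the degree-zero cohomology being $\bC_S$.

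Two points, however, need repair. First, your local model of a section of $\cA^{1,0}(D)\oplus\cA^{0,1}$ (and likewise of $\Omega^1(D)$) is too restrictive: a general germ is $\frac{f\,dx+g\,dy}{x}+\phi$ with $f,g$ smooth and $\phi$ a smooth $1$--form, so the $dy$--coefficient may also carry a simple pole, and you have dropped the $(0,1)$ summand from the outset. A substantive part of the computation --- the part the paper actually carries out --- is that $d\omega=0$ kills this polar $dy$--part (via the $-g/x^2$ term in $dx\wedge dy$) and forces the residue to be constant along $D$; the analogous step is needed in the holomorphic cases, where your claim that closedness forces $h_1$ to be independent of $x$ is not what happens (take $h_1=x$, $h_2=0$): what is forced is that $h_1\vert_{x=0}$ is locally constant, and this constancy is precisely what distinguishes $\bC_D$ from $j_*\cO_D$ in the answer. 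Second, your reconciliation ``the complexes are exact except for a copy of $\bC_D$, so they compute $R^\bullet j_*\bC$'' overshoots for the smooth complex: $d$ is not surjective onto $\cA^2(2D)$, since for $\omega\in\cA^{1,0}(D)\oplus\cA^{0,1}$ the $(1,1)$ and $(0,2)$ components of $d\omega$ have pole order at most one, so for instance $x^{-2}\,dx\wedge d\overline{x}$ is not locally in the image; the degree-two cohomology of \eqref{smooth} is nonzero (and is never needed in the paper). Only the assertion about the middle cohomology sheaf should be claimed --- which is all that your residue computation, once completed as above, actually delivers.
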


\begin{proof}
We investigate the smooth case in \eqref{smooth}. Define
$$\cN\,:=\, \ker 
(d: \cA^{1,0}(D) + {\cA^{0,1}} \to \cA^2(2D))\, .$$ 
Let $U\, \subset\, S$ be an open subset with coordinate function $(x,\, y)$ such that 
$x\,=\,0$ is the equation of $D\bigcap U$. Take $\omega\,\in\, \cN(U)$.
We have: $$\omega \,=\, \frac {f(x,y)dx+g(x,y)dy}{x}+ \phi\, ,$$
where $\phi$ is a smooth $1-$form. The terms with poles of $\debar\omega$ in the coefficients 
of $dx\wedge d\overline x$ and $dx\wedge d\overline y$ are $-\frac{\partial 
f}{\partial\overline x} $ and $-\frac{\partial f}{\partial\overline y}$ respectively. Hence, 
if $d\omega\,=\,0$, then $f(x,y)$ is holomorphic. Using the Taylor expansion of $f$ with 
respect to $x$ we can write $$\omega \,=\, \frac {h(y)dx+g(x,y)dy}{x}+ \widetilde{\phi}\, .$$ 
The polar terms of $d\omega$ in $dx\wedge dy$ is
$$
\left(-\frac{h'(y)}x 
+\frac1x\frac {\partial g}{\partial x} -\frac g{x^2}\right)dx\wedge dy\, ,
$$
and hence if $d\omega\,=\,0$, then
$g(x,y)\,=\,0$ and $h'(y)\,=\,0$, in which case $\omega\,=\,\frac{\lambda dx}{x}+\widetilde{\phi}$
with $\widetilde\phi$ being closed, and hence $\omega$ is locally exact. The residue map
$$\rm{res}\,:\,\cN\,\longrightarrow\, {\bC}_D\, ,\ \ 
\omega\,\longmapsto\,\, \lambda$$ is well defined, and its kernel is $d\cC^\infty.$
\end{proof}

Consider \eqref{hol} and \eqref{hol2}. Define
$$ \cN_h\,=\,\ker (d\,:\, \Omega^1(D)\,\longrightarrow\, \Omega^2(2D))\, \subset\, \Omega^1(D)\, ;$$
note that it is not a coherent sheaf. Now the exact sequence in \eqref{hol2} yields
\begin{equation}\label{ec1}
0\,\longrightarrow\, \cN_h\,\longrightarrow\, \Omega^1(D)\,\longrightarrow\,
\Omega^2(2D)\,\longrightarrow\, 0
\end{equation}
$$ 0\,\longrightarrow\, \bC_S\,\longrightarrow\, \cO_S\stackrel{d}{\longrightarrow}\,
\cN_h
\,\longrightarrow\, \bC_D\,\longrightarrow\, 0\, .$$
We also get
\begin{equation}\label{2ec}
0\,\longrightarrow\, L_h\,\longrightarrow\, \cN _h\,\longrightarrow\, \bC_D\,\longrightarrow\, 0
\end{equation}
\begin{equation}\label{3ec}
0\,\longrightarrow\, \bC_S\,\longrightarrow\, \cO_S\,\longrightarrow\, L_h
\,\longrightarrow\, 0\, ,
\end{equation}
where $L_h \,=\, d \cO_S$. 
Let us construct a homomorphism $$\Gamma\,:\, H^0(S,\,\Omega^2(2D)) \,\longrightarrow\, H^1(D,\,\bC)$$
as follows: 
Take $\zeta\,\in\, H^0(S,\, \Omega^2(2D))$. Set $\Gamma(\zeta)\,\in\, H^1(D,\,\bC)$ to be the image
of its coboundary $\partial \zeta \,\in\, H^1(S,\, \cN_h)$ (see \eqref{ec1}) under the homomorphism
$H^1(S,\, \cN_h)\,\longrightarrow\, H^1(\bC_D) \, \cong\, H^1(D,\, \bC)$ (see \eqref{2ec}).
 
\begin{defin}\label{defst}
A form $\zeta\,\in\, H^0(S,\,\Omega^2(2D))$ is of \textit{second type} if
$$
[\zeta]\,=\, j^\ast [\gamma]\,\in\, H^2(S 
\setminus D,\,\bC)
$$
with $[\gamma]\,\in\, H^2(S,\,\bC)$, where
$j\,:\, S \setminus D \,\longrightarrow\, S$ is the 
inclusion map. Equivalently, $\zeta$ is a differential of the second type if and
only if $\Gamma(\zeta)\,=\, 0.$ 
\end{defin}

For any $\zeta$ of second type, since $\Gamma (\zeta)\,=\,0,$ from \eqref{2ec} it follows 
that $\partial\zeta\,=\,r(\beta)$, where $\beta\,\in\, H^1(S,\, L_h)$ and $r\,:\,
H^1(S,\, L_h)\, \longrightarrow\, H^1(S,\, \cN _h)$ is the homomorphism of cohomologies given
by the injective homomorphism of sheaves in \eqref{2ec}. The coboundary homomorphism
$\partial\,:\, H^1(S,\,L_h)\,\longrightarrow\, H^2(S,\,\bC_S)$ (see \eqref{3ec}) gives a class
$\partial (\beta)\,=\,[\gamma]\,\in\, H^2(S,\bC_S).$ By construction, we have
$j^\ast([\gamma])\,=\,[\zeta]$, where $j$ is the inclusion map
in Definition \ref{defst}, and $$[\gamma]\,\longmapsto\, 0 \,\in \,H^2(S,\, \cO_S)\, .$$
This means that the $(0,\,2)$ part of $[\gamma]$ vanishes.
Consequently, using the Hodge decomposition for $S$,
\begin{equation}\label{gai}
[\gamma]\,=\,\gamma^{2,0}+\gamma^{1,1}\, ,
\end{equation}
where $\gamma^{2,0}$ is holomorphic and
$\gamma^{1,1}$ is harmonic of type $(1,\,1)$. Therefore
$$\zeta'\,:=\, \zeta-\gamma^{2,0}$$
is a holomorphic differential of second type of pure type $(1,\, 1)$.
 
\begin{prop}\label {doppio}
Let $\zeta'\,\in\, H^0(S,\, \Omega^2_S(2D))$ be a holomorphic differential of the second type and
of pure type $(1,\,1)$. Then there is a class $\alpha\,\in\, H^0(S,\,\cA^{1,0}(D))$ such that
$$\zeta'-\gamma^{1,1}\,=\, d\alpha\, ,$$ that is $\partial\alpha\,=\,\zeta'$
and $\debar \alpha\,=\,-\gamma^{1,1},$ where $\gamma^{1,1}$ is a harmonic $(1,1)$ form on $S.$
\end{prop}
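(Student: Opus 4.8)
The plan is to reduce everything to producing a single $d$--primitive with controlled poles and types. Set $\omega\,:=\,\zeta'-\gamma^{1,1}\,\in\, H^0(S,\,\cA^2(2D))$. By the discussion preceding the statement the hypotheses on $\zeta'$ say precisely that $[\zeta']\,=\,j^\ast[\gamma^{1,1}]$ in $H^2(S\setminus D,\,\bC)$, where $\gamma^{1,1}$ is the harmonic representative of the (pure type $(1,1)$) class attached to $\zeta'$. Hence $\omega$ is $d$--closed, is de Rham exact on $S\setminus D$, and — since $\zeta'$ is of type $(2,0)$ and the harmonic form $\gamma^{1,1}$ is of pure type $(1,1)$ — has vanishing $(0,2)$--component. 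It suffices to find $\alpha\,\in\, H^0(S,\,\cA^{1,0}(D))$ with $d\alpha\,=\,\omega$, since splitting this equation by type then forces $\partial\alpha\,=\,\zeta'$ and $\debar\alpha\,=\,-\gamma^{1,1}$ automatically. I would carry this out in two stages: first solve $d\widetilde\alpha\,=\,\omega$ allowing $\widetilde\alpha$ to carry an extra (pole--free) $(0,1)$--term, then remove that term using Hodge theory.

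For the first stage, a computation in local coordinates entirely parallel to the proof of the preceding lemma shows that $\omega$ is locally of the form $d\alpha_{\mathrm{loc}}$ with $\alpha_{\mathrm{loc}}\,\in\,\cA^{1,0}(D)$: with $x=0$ a local equation for $D$, the $(1,0)$--form $-a(y)x^{-1}\,dy\,\in\,\cA^{1,0}(D)$ (where $a(y)x^{-2}\,dx\wedge dy$ is the order--two polar part of $\zeta'$) has exterior derivative $a(y)x^{-2}\,dx\wedge dy$ and so cancels the pole of order two; the $\debar$--Poincar\'e lemma removes the smooth $(1,1)$--part; and the remaining holomorphic $(2,0)$--form of pole order at most one is integrated to a holomorphic $(1,0)$--form of pole order at most one. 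Writing $\cN$ for the sheaf $\ker(d\,:\,\cA^{1,0}(D)\oplus\cA^{0,1}\to\cA^2(2D))$ of the preceding lemma and using that $\cC^\infty$, $\cA^{1,0}(D)$, $\cA^{0,1}$ are fine, this local solvability attaches to $\omega$ an obstruction class $o(\omega)\,\in\, H^1(S,\,\cN)$ which vanishes exactly when $\omega\in dH^0(S,\,\cA^{1,0}(D)\oplus\cA^{0,1})$. The heart of the proof — and the step I expect to be the main obstacle — is showing $o(\omega)\,=\,0$. With $L\,:=\,d\,\cC^\infty\subset\cN$, the proof of the preceding lemma gives exact sequences $0\to L\to\cN\to\bC_D\to 0$ and $0\to\bC_S\to\cC^\infty\to L\to 0$; the second identifies $H^1(S,\,L)\,\cong\, H^2(S,\,\bC)$ (as $\cC^\infty$ is fine), and the first gives a map $H^0(D,\,\bC)\to H^1(S,\,L)\stackrel{r}{\longrightarrow}H^1(S,\,\cN)$. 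The image of $o(\omega)$ in $H^1(S,\,\bC_D)=H^1(D,\,\bC)$ should be identified with the residue $\Gamma(\zeta')$ of Definition \ref{defst} (the smooth contribution $-\gamma^{1,1}$ has no polar part), which is $0$ because $\zeta'$ is of the second type; hence $o(\omega)=r(\widetilde o)$ for some $\widetilde o\in H^2(S,\,\bC)$. Restricting to $S\setminus D$ (where $\bC_D=0$, so $\cN=L$) identifies $\widetilde o$ with the de Rham class of $\omega|_{S\setminus D}$, which is $0$; thus $\widetilde o$ lies in $\ker(j^\ast\colon H^2(S,\,\bC)\to H^2(S\setminus D,\,\bC))$, and by the Gysin sequence this kernel is the span of $[D]$, i.e.\ the image of $H^0(D,\,\bC)\to H^1(S,\,L)$; exactness then forces $o(\omega)=r(\widetilde o)=0$. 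The genuinely delicate points here are the identifications of the two components of $o(\omega)$ with $\Gamma(\zeta')$ and with $[\omega|_{S\setminus D}]$; these I would extract from the compatibility (built into the preceding lemma) between the smooth and the holomorphic versions of these complexes and of their residue maps. The outcome of this stage is $\widetilde\alpha\,=\,\alpha'+\alpha''$ in $H^0(S,\,\cA^{1,0}(D)\oplus\cA^{0,1})$ with $d\widetilde\alpha\,=\,\omega$.

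For the second stage, compare $(0,2)$--parts in $d\widetilde\alpha\,=\,\omega$: since $\omega$ has no $(0,2)$--part, $\debar\alpha''\,=\,0$, so Hodge theory on the compact K\"ahler surface $S$ gives $\alpha''\,=\,\debar u+h$ with $u\in\cC^\infty(S)$ and $h$ a harmonic $(0,1)$--form, which is therefore $d$--closed. Replacing $\widetilde\alpha$ by $\widetilde\alpha-du\,=\,(\alpha'-\partial u)+h$, whose $(1,0)$--part $\alpha'-\partial u$ is still in $H^0(S,\,\cA^{1,0}(D))$ because $\partial u$ is smooth, one gets $\omega\,=\,d(\alpha'-\partial u)+dh\,=\,d(\alpha'-\partial u)$; hence $\alpha\,:=\,\alpha'-\partial u$ is the desired primitive, and splitting $d\alpha=\omega$ by type gives $\partial\alpha=\zeta'$ and $\debar\alpha=-\gamma^{1,1}$.
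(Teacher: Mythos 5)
Your proposal is correct and follows essentially the same route as the paper: one first solves $d\beta\,=\,\zeta'-\gamma^{1,1}$ with $\beta\,\in\, H^0(S,\,\cA^{1,0}(D)\oplus \cA^{0,1})$ using the sheaf complexes of the preceding lemma (your obstruction-class analysis in $H^1(S,\,\cN)$, with the residue part killed by the second-type hypothesis and the remaining $H^2(S,\,\bC)$-part killed by the choice $j^\ast[\gamma^{1,1}]\,=\,[\zeta']$ and the Gysin kernel $\bC\cdot[D]$, just makes explicit the vanishing the paper asserts in one line), and then removes the $(0,1)$-component by compact K\"ahler Hodge theory. The only cosmetic difference is the last step: you decompose $\alpha''\,=\,\debar u+h$ with $h$ harmonic (hence $d$-closed) and subtract $du$, whereas the paper applies the $\partial\debar$-lemma to $\partial\beta^{0,1}$ and sets $\alpha\,=\,\beta^{1,0}-\partial f$; the two are equivalent.
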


\begin{proof} 
From the sequence \eqref{smooth}
$$\cC^\infty\,\stackrel{d}{\longrightarrow}\,
 \cA^{1,0}(D)\oplus \cA^{0,1} \,\stackrel{d}{\longrightarrow}\, \cA^2(2D)$$
we get an exact sequence
\begin{equation} 0\,\longrightarrow\, \bC_S\,\longrightarrow\, \cC^\infty
\,\stackrel{d}{\longrightarrow}\, \cN \,\longrightarrow\, \bC_D\,\longrightarrow\, 0\, .
\label{smooth1}
\end{equation}
Let $\gamma^{1,1}$ be a $(1,\,1)$ harmonic form on $S$ such that $j^*([\gamma])\,=\,[\zeta'].$
The form $$\zeta'-\gamma^{1,1}\,\in\, H^0(S,\,d(A^{1,0}(D)\oplus \cA^{0,1}))$$
maps to zero in $ H^1(D,\, \bC)$. We find then
$$\beta\,=\,\beta^{1,0}+\beta^{0,1}\,\in\, H^0(S,\,\cA^{1,0}(D)\oplus \cA^{0,1})$$
such that $d\beta\,=\,\zeta'-\gamma^{1,1}$.
Since $\zeta'$ is of type $(2,\,0)$,
$$
\zeta'\,=\, \partial \beta^{1,0}\ \ \ \text{ and } \, \ \ -\gamma^{1,1}\,=\, \debar \beta^{1,0}+
\partial \beta^{0,1}\, , \ \ \debar \beta^{0,1}\,=\,0\, .
$$
Now $\partial \beta^{0,1}$ is a smooth form that is $\partial$ exact and $\debar$ closed. By
$\partial\debar$ lemma, there is a smooth function $f$ such that
$\partial \beta^{0,1}\,=\,\partial\debar f$. Set $$\alpha\,=\, \beta^{1,0}-\partial f\,\in\,
H^0(S,\, A^{1,0}(D))\, .$$ We get that
$$\partial\alpha\,=\, \zeta'\ , \ \ \debar \alpha\,=\, -\gamma^{1,1}\, ,$$
and the proof is complete.
\end{proof}

\subsection{The class of $\widehat{\eta}$}\label{se4.2}

Let $C$ be a smooth complex projective curve of genus $g$. As before, $p$ and $q$ are the 
projections of $S\,:=\, C\times C$ to the first and second factors respectively. Also, as 
before $\Delta\, \subset\, S$ is the reduced diagonal divisor. Recall that the line bundle $\mathcal L$
in \eqref{dcl} is identified with $\Omega^2_S(2\Delta)\,:=\, \Omega^2_S\otimes {\mathcal O}_S(2\Delta)$.

{}From the exact sequence of homology groups for the pair $(S,\, \Delta)$ and Poincar\'e and Lefschetz duality, it 
follows that the homomorphism $j^\ast\,:\, H^2(S,\,\bZ)\,\longrightarrow\, H^2(S\setminus \Delta,\,\bZ)$ is 
surjective with the kernel of $j^\ast$ being generated by the class of the diagonal. Consequently, every element of 
$H^0(\Omega^2_S(2\Delta))$ is of second type. In particular, the form $\widehat{\eta}$ constructed in 
\eqref{weta} is of second type.

\begin{teo}\label{1,1}
All the elements of $H^0(S,\, \Omega^2_S(2\Delta))$ with cohomology class in $H^2(S\setminus \Delta,\,\bC)$ of pure type $(1,\,1)$
are actually contained in the line ${\mathbb C}\cdot \widehat{\eta}$.
\end{teo}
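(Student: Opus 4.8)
The plan is to show that any two sections of $\Omega^2_S(2\Delta)$ of pure type $(1,1)$ are proportional, and then observe that $\widehat\eta$ is one such section, so the whole space is the line ${\mathbb C}\cdot\widehat\eta$. The key structural input is Proposition \ref{doppio}: given $\zeta'\in H^0(S,\,\Omega^2_S(2\Delta))$ of pure type $(1,1)$, we obtain $\alpha\in H^0(S,\,\cA^{1,0}(\Delta))$ with $\partial\alpha\,=\,\zeta'$ and $\debar\alpha\,=\,-\gamma^{1,1}$ for some harmonic $(1,1)$--form $\gamma^{1,1}$ representing the cohomology class $j^\ast[\gamma]\,=\,[\zeta']$ on $S\setminus\Delta$. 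The first step is to recall that $\widehat\eta$ itself is of second type (noted in the excerpt just before the theorem) and of pure type $(1,1)$ (Theorem \ref{1,1}'s hypothesis context, established via Proposition \ref{prophi2} and the earlier Hodge-theoretic description). So it suffices to prove the space of such sections is at most one-dimensional.

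The main idea is to analyze the ``polar part'' of such a section along the diagonal, i.e. its restriction to the first-order neighborhood, and show it is determined up to scalar. Concretely, if $\zeta'_1,\,\zeta'_2$ are two sections of pure type $(1,1)$, then after rescaling we may arrange that their restrictions to $\Delta$ (in the canonical trivialization of $\mathcal L\vert_\Delta$ discussed in Section \ref{se3}) agree; then $\zeta'_1-\zeta'_2$ lies in $H^0(S,\,\Omega^2_S(2\Delta)(-\Delta))\,=\,H^0(S,\,\Omega^2_S(\Delta))$. Now I would argue that $H^0(S,\,\Omega^2_S(\Delta))\,=\,H^0(S,\,\Omega^2_S)$: indeed, by adjunction $\Omega^2_S(\Delta)\vert_\Delta\,=\,K_\Delta$-twist computations show a section with a simple pole along $\Delta$ would have a residue giving an element of $H^0(\Delta,\,K_C)$, but a genuine $(2,0)$--form on $S$ with a single pole along the diagonal, being of second type and pure type $(1,1)$ simultaneously, forces the residue to vanish (the residue is a holomorphic $1$--form, hence a nonzero $(1,0)$--class, contradicting the $(1,1)$ purity via the $\Gamma$--obstruction of Definition \ref{defst}). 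Thus $\zeta'_1-\zeta'_2\in H^0(S,\,\Omega^2_S)\,=\,H^{2,0}(S)$; but a nonzero element here is of pure type $(2,0)$, not $(1,1)$, while the difference of two pure $(1,1)$ classes is pure $(1,1)$. Since the $(2,0)$ and $(1,1)$ Hodge pieces intersect trivially, $\zeta'_1\,=\,\zeta'_2$, proving the space is a single line.

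The hard part will be making precise the claim that a section of $\Omega^2_S(\Delta)$ which is simultaneously of second type and of pure type $(1,1)$ must in fact be holomorphic with no pole, i.e. controlling the residue along $\Delta$. The clean way is to use $\Gamma$ from the paragraph before Definition \ref{defst}: for a form with at most a simple pole on $\Delta$ the residue $\mathrm{res}$ along $\Delta$ directly computes (up to the maps in \eqref{2ec}--\eqref{3ec}) the $(1,0)$--part of the associated cohomology class in $H^1(\Delta,\,\C)\,=\,H^1(C,\,\C)$; purity of type $(1,1)$ of the ambient class on $S\setminus\Delta$ kills the image in $H^1(\Delta,\,\C)$, hence the residue, hence the pole. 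One must check the compatibility of the diagonal-restriction/residue with the abstract $\Gamma$ of the previous section; this is the only genuinely technical point, and it is essentially a local Poincaré-residue computation in the coordinates $z_1,\,z_2$ (equivalently $x\,=\,z_2-z_1$, $y\,=\,z_1+z_2$) already introduced in Section \ref{se2}. Everything else is linear algebra on the Hodge decomposition of $H^2(S,\,\C)$.
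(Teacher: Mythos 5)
Your proposal proves only the easy half of the theorem and assumes the hard half. You take as given that $\widehat\eta$ is of pure type $(1,1)$, citing Proposition \ref{prophi2} and ``the earlier Hodge-theoretic description''; but Proposition \ref{prophi2} only asserts the symmetry of $\widehat\eta$, and Proposition \ref{diagram-eta} is a fiberwise statement (for each $x$ the class $j_x(\eta_x(v))$ lies in $H^{0,1}(C)\subset H^1(C,\C)$), not a statement about the class of $\widehat\eta$ in $H^2(S\setminus\Delta,\C)$. Nothing established before Theorem \ref{1,1} says that this class is of type $(1,1)$ modulo $[\Delta]$; that assertion is precisely the substantive content of the theorem (see the formulation in the introduction: $\widehat\eta$ is the unique pure $(1,1)$ element restricting to $1$ on $\Delta$), and it is what the paper's proof actually establishes. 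The paper applies Proposition \ref{doppio} to $\widehat\eta$, writes $\widehat\eta=\gamma+d\alpha$ with $\alpha\in \cA^{1,0}(\Delta)$, and proves $\gamma^{2,0}=0$ by showing $\int_S\gamma\wedge p^*\overline\omega\wedge q^*\overline\beta=0$ for all holomorphic $1$--forms $\omega,\beta$ on $C$, via the two limiting integrals \eqref{tp1} and \eqref{tp2}: a Fubini argument along the fibers of $q$ using the local expression \eqref{phi} of $\mu_t$ (this is exactly where the fiberwise $(0,1)$ property gets promoted to a global type statement), together with estimates of $\alpha$ on the boundary of a shrinking tube around $\Delta$. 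Your proposal has no substitute for this step, and without it your own reduction collapses: knowing that the pure $(1,1)$ sections form at most a line does not place them in the specific line $\C\cdot\widehat\eta$ unless $\widehat\eta$ itself is known to be pure $(1,1)$ (or the space is zero).

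The uniqueness half that you do carry out is essentially correct and is, in effect, the complement that the paper leaves implicit: a pure $(1,1)$ section restricts on $\Delta$ to a constant, and subtracting the corresponding multiple of $\widehat\eta$ (once its purity is known) lands in $H^0(S,\Omega^2_S(\Delta))=H^0(S,\Omega^2_S)$, where purity modulo $[\Delta]$ (itself of type $(1,1)$) forces vanishing, since the harmonic representative of a holomorphic $2$--form is the form itself. Two repairs, though: the equality $H^0(S,\Omega^2_S(\Delta))=H^0(S,\Omega^2_S)$ holds unconditionally (it is quoted in the proof of Lemma \ref{lem-ch}); for instance, restricting a section to a fiber $\{x\}\times C$ gives a $1$--form with at most a simple pole at $x$, whose residue vanishes by the residue theorem, so the residue along $\Delta$ is identically zero. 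Hence your detour through second type, $\Gamma$, and $(1,1)$ purity to kill the simple pole is unnecessary; and where you do invoke $\Gamma$, note that every element of $H^0(S,\Omega^2_S(2\Delta))$ is automatically of second type because the kernel of $j^*$ on $H^2(S,\C)$ is generated by $[\Delta]$, so $\Gamma$ vanishes for free and the $(1,1)$ hypothesis plays no role there. None of this, however, fills the gap identified above.
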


\begin{proof}
Write $[\widehat{\eta}] \,=\, j^*([\gamma])$ and $\gamma \,=\, \gamma^{2,0} + \gamma^{1,1}$
as in \eqref{gai} and Definition \ref{defst}. By Proposition \ref{doppio} 
there is a form $\alpha\,=\,\alpha^{1,0}\,\in\, C^\infty(\cA^{1,0}(\Delta))$ such that
$$\widehat{\eta}-\gamma^{2,0}\,=\,\partial \alpha\ \ \ \text{ and } \ \ \ \gamma^{1,1}
\,=\, -\debar \alpha\, .$$
Since $\debar \alpha$ is smooth it follows that the polar part of $\alpha$ is smooth, that is,
in local coordinates around the diagonal,
$$\alpha\,=\, \frac{d(w+z)}{z-w}+ h_1(z,w)dz+h_2(z,w)dw\, ,$$
where $h_1$ and $h_2$ are smooth functions. We want to prove that $\gamma^{2,0} \,=\,0$. 

Since the decomposable forms generate the $(2,\,0)$ cohomology, it suffices to prove that
\begin{equation}\label{tp}
\int_S \gamma \wedge\overline{\omega}_1\wedge \overline{\beta}_2\,=\,0\, ,
\end{equation}
where $ \overline{\omega}_1 \,=\, p^\ast\overline{\omega}$ and $\overline{\beta}_2
\,=\, q^\ast\overline{\beta}$ with $\omega$ and $\beta$ being holomorphic $1$--forms on $C$. 

We write $$\widehat{\eta}\,=\,\gamma+d\alpha.$$ 

Let $U_r$ be a tubular neighborhood of the diagonal $\Delta$ and $\chi_r$
the characteristic function of the complement $S\setminus U_r$.
We will show that 
\begin{equation}\label{tp1}
\lim_{r\to 0}\int_S \chi_r \widehat{\eta} \wedge \overline{\omega}_1\wedge \overline{\beta}_2\,=\,0
\end{equation}
and 
\begin{equation}\label{tp2}
\lim_{r\to 0}\int_S \chi_r d\alpha \wedge \overline{\omega}_1\wedge \overline{\beta}_2\,=\,0\, .
\end{equation}
Note that \eqref{tp1} and \eqref{tp2} together imply \eqref{tp}.

Take $\{W,\, w\}$, where $W\, \subset\, C$ is an open subset and $w$ is a holomorphic coordinate function
on $W$; define $$V\,:=\,q^{-1}(W) \,=\, C \times W\, .$$ On $V$, we have
$\widehat{\eta}\,=\, \mu \wedge dw$ where $\mu$ is a $(1,\,0)$ meromorphic form with pole on
the diagonal of $W\times W$. Using Fubini's theorem,
$$\int_V \chi_r \widehat{\eta} \wedge \overline{\omega}_1\wedge \overline{\beta}_2
\,=\, \int_W( \int_{C \times\{t\}} \chi_r\mu_t\wedge\overline{\omega})dw\wedge \overline{\beta}\, .$$
The form $ \chi_r \mu_t\wedge\overline{\omega}$ is can be shown to be exact.
Indeed, this follows from the fact that $\mu_t$ is the form defined in \eqref{phi},
whose local expression in a coordinate $z$ centered in $t$ is 
$\mu_t \,=\,( \frac{1}{z^2}+h(z)) dz \,=\, \partial(-\frac1z+f(z))$ with $f(z)$ holomorphic. So,
$$\int_{C \times\{t\}} \chi_r\mu_t\wedge\overline{\omega}
\,=\, \int_{\partial \Delta_r}(-\frac1z+f(z))\overline{\omega}\, .$$
Now writing $\omega\,=\, g( \overline{z})d\overline z$,
\begin{equation}\label{intl}
\int_{\partial \Delta_r}\left(-\frac1z+f(z)\right)g(\overline{z}) d\overline z
\end{equation}
$$
= \, \frac{1}{\sqrt{-1}}
\int_0^{2\pi}\left(\frac{-1}{re^{\sqrt{-1}\theta}}re^{-\sqrt{-1}\theta}g(re^{-\sqrt{-1}\theta})+
f(re^{\sqrt{-1}\theta})g(re^{-\sqrt{-1}\theta})re^{-\sqrt{-1}\theta}d\theta\right)\, .
$$
The integrand functions $$ -g(re^{-\sqrt{-1}
\theta})e^{-2\sqrt{-1}\theta}+f(r\cdot e^{\sqrt{-1}\theta})g(r\cdot e^{-\sqrt{-1}\theta})re^{-\sqrt{-1}
\theta}$$
are integrable and bounded. The limit of the integral in \eqref{intl} for $r\,\to\, 0$ is $0$
since $g$ is anti-holomorphic and $g(r\cdot e^{-\sqrt{-1}\theta})=g(0)+r(\widetilde{g}(r
\cdot e^{-\sqrt{-1}\theta})).$
Then we can pass the limit under the integral sign:
$$\lim_{r\to 0}\int_V \chi_r \widehat{\eta} \wedge \overline{\omega}_1\wedge \overline{\beta}_2
\,=\,\lim_{r\to 0}\left(\int_W\int_{C \times\{t\}} \chi_r\mu_t\wedge\overline{\omega}\right)dw\wedge
\overline{\beta}$$
$$=\, \int_W\left(\lim_{r\to 0} \int_{C \times\{t\}} \chi_r\mu_t\wedge\overline{\omega}\right)dw\wedge
\overline{\beta}\,=\,0\, .$$
This proves \eqref{tp1}.

To prove \eqref{tp2},
$$\lim_{r\to 0}\int_S \chi_r d\alpha \wedge \overline{\omega}_1\wedge \overline{\beta}_2
\,=\,\lim_{r\to 0} \int_{\partial U_r} \alpha \wedge \overline{\omega}_1\wedge \overline{\beta}_2. $$
Cover the diagonal with a finite number of products of disks
$A_i\times B_i$, biholomorphic to $\{(z,\,w)\,\in\, {\bC}^2 \,\mid\, |z|\,<\,1,\,\, |w|\,<\,1\}$ (with compact closure).
We take $r$ small enough in a way that 
$\partial U_r \,\subset\,\bigcup_i A_i\times B_i$ and we may assume that 
$\partial U_r \bigcap (A_i\times B_i)$ corresponds to ${\mathcal B}_r\,=\,\{(z,\,w)\,\in\, {\bC}^2\,\mid\, |z-w|\,=\,r\},$
hence $w\,=\,z+re^{\sqrt{-1}\theta}.$ We may also assume that $\alpha $ is of the type
$$\alpha\,=\, \frac{d(w+z)}{z-w}+ h_1(z,w)dz+h_2(z,w)dw$$
in local coordinates, where $h_1$ and $h_2$ are smooth functions. Write moreover
$\overline\omega_1\,=\,f_1(\overline{z})d\overline z$ and $\overline\alpha_2\,=\,
f_2(\overline{w})d\overline w$ with the $f_i$ being anti-holomorphic.

We have
\begin{equation}\label{es1}
\int_{{\mathcal B}_r} \frac{d(w+z)}{z-w}f_1(\overline{z})d\overline zf_2(\overline{w})d\overline{ w}
\end{equation}

$$
=\, 2\sqrt{-1} \int_{{\mathcal B}_r} f_1(\overline{z}) f_2(\overline{w}) e^{-2\sqrt{-1} \theta} dz d\overline{z}
d\theta
$$
$$
=\, 2\sqrt{-1} \bigg( \int_{|z|<1} \int_0^{2\pi}f_1(\overline{z}) f_2(\overline{z}) e^{-2\sqrt{-1}\theta} dz
d\overline{z} d \theta +
 \int_{|z|<1} \int_0^{2\pi}
rf_1(\overline{z}) h(\theta, \overline{z}) e^{-2\sqrt{-1}\theta} dz d\overline{z} d\theta\, \bigg) . 
$$
Now we have $\int_{|z|<1} f_1(\overline{z}) f_2(\overline{z}) dz d \overline{z}
\int_0^{2 \pi} e^{-2\sqrt{-1} \theta} d\theta \,=\, 0$ and
$$\int_{|z|<1} \int_0^{2\pi}\Big\vert rf_1(\overline{z}) h(\theta, \overline{z}) e^{-2\sqrt{-1}\theta}
dz d\overline{z} d\theta\Big\vert \,=\, r \int_{|z|<1}\left(\int_0^{2\pi}\Big\vert f_1(\overline{z})
h (\theta, \overline{z})\Big\vert d\theta\right) dz d\overline{z} \,=\, rc\, ,$$
where $c$ is a constant. Then the limit of the integral in \eqref{es1} is zero as $r\,\to\, 0$. 

It now follows that we have to evaluate only 
$$\int_{{\mathcal B}_r} (h_1(z,w)dz+h_2(z,w)dw) f_1(\overline{z})f_2(\overline{w}) d\overline z d\overline w\, .$$

We compute the two terms separately, let $G\,=\,h_1(z,w)f_1(\overline{z})f_2(\overline{w})$: 
$$\int_{{\mathcal B}_r} \Big\vert G(z,w)dzd\overline z d\overline w\Big\vert\,=\,
\int_{{\mathcal B}_r}\Big\vert (G(z,\theta)dzd\overline 
z)(-\sqrt{-1}re^{-\sqrt{-1}\theta}d\theta)\Big\vert
$$
$$
=\, r \int _{|z| <1}(\int_0^{2\pi}\Big\vert G(z,\theta)\Big\vert d\theta) dzd\overline z\,=\,rk\, ,
$$
where $k$ is a constant, and similarly for the second term. Then the limit as $r\,\to\, 0$ is zero 
and therefore summing $$ \lim_{r\to 0}\Big\vert\int_S \chi_r d\alpha \wedge \overline{\omega}_1\wedge 
\overline{\beta}_2\Big\vert\,\,\leq\,\, \lim_{r\to 0}\sum_i\int_{A_i\times B_i}\Big\vert\chi_r d\alpha \wedge 
\overline{\omega}_1\wedge \overline{\beta}_2\Big\vert\,=\,0\, .$$
This proves \eqref{tp2}.
\end{proof}

\section*{Acknowledgements}

We thank the referee for helpful comments to improve the exposition.
We warmly thank Alessandro Ghigi for several useful discussions on the properties of the form 
$\widehat{\eta}$. We are grateful to Scott Wolpert for the argument in Remark \ref{rgenus2}. 
We thank Richard Hain for a helpful correspondence. I. Biswas is supported by a J. C. Bose 
Fellowship, and school of mathematics, TIFR, is supported by 12-R$\&$D-TFR-5.01-0500. E. 
Colombo, P. Frediani and G. P. Pirola are partially supported by by MIUR PRIN 2017 ``Moduli 
spaces and Lie Theory'' and by INdAM (GNSAGA). P. Frediani and G. P. Pirola are partially 
supported by MIUR, Programma Dipartimenti di Eccellenza (2018-2022) - Dipartimento di 
Matematica ``F. Casorati'', Universit\`a degli Studi di Pavia.

\end{document}